\documentclass[11pt]{article}
\usepackage{amsmath} 

 \setlength{\textwidth}{6.2 in} \setlength{\textheight}{9in} 
 \setlength{\topmargin}{-0.1in} \setlength{\headheight}{0.1in}
 \setlength{\headsep}{0.2in}
 \setlength{\oddsidemargin}{-0.1in} \setlength{\evensidemargin}{-0.1in}
\input epsf%
\newtheorem{theorem}{Theorem}[section]
\newtheorem{corollary}{Corollary}[section]

\newtheorem{lemma}{Lemma}[section]

\newtheorem{Definition}{Definition}[section]
\def\qed{{\hfill{\vrule height4pt width3pt depth2pt}}}

\newtheorem{Proof}{Proof.}

\newenvironment{proof}{\begin{Proof} \rm}
    {\hfill {\vrule height4pt width3pt depth2pt}\end{Proof}}

\def\ad#1{\begin{aligned}#1\end{aligned}}  \def\b#1{\mathbf{#1}}
\def\a#1{\begin{align*}#1\end{align*}} \def\an#1{\begin{align}#1\end{align}} 
\def\e#1{\begin{equation}#1\end{equation}} \def\d{\operatorname{div}}
\def\p#1{\begin{pmatrix}#1\end{pmatrix}} 
 \def\vc{\operatorname{\bf curl}} \numberwithin{equation}{section}
 
\def\Qe{$Q_{k+1,k}$-$Q_{k,k+1}$}

\def\boxit#1{\vbox{\hrule height1pt \hbox{\vrule width1pt\kern1pt
     #1\kern1pt\vrule width1pt}\hrule height1pt }}
 \def\lab#1{\boxit{\small #1}\label{#1}}   
  \def\mref#1{\boxit{\small #1}\ref{#1}}    
 \def\meqref#1{\boxit{\small #1}\eqref{#1}}    
 
  \def\lab#1{\label{#1}} \def\mref#1{\ref{#1}} \def\meqref#1{\eqref{#1}}


\begin{document} 

\title {
Superconvergence of the $Q_{k+1,k}$-$Q_{k,k+1}$ divergence-free finite element}

\author {Yunqing Huang and Shangyou Zhang
    } \date{}

\maketitle

\begin{abstract}
   By the standard theory, the stable 
	\Qe/$Q_{k}^{dc'}$ 
       divergence-free element converges with the optimal
       order of approximation
 	for the  Stokes equations,
     but only order $k$ for the velocity in $H^1$-norm 
     and   the pressure  in $L^2$-norm.
   This is due to one polynomial degree less in $y$ direction for
     the first component of velocity, a $Q_{k+1,k}$ polynomial.
   In this manuscript,  we will show a superconvergence of the
     divergence free element that the order of convergence is truly
     $k+1$, for both velocity and pressure.
  Numerical tests are provided confirming the sharpness of the theory.
 
  \vskip 15pt
 
\noindent{\bf Keywords.} {
     mixed finite element, Stokes equations, divergence-free element,
     quadrilateral element, rectangular grids,  
     superconvergence.}

 \vskip 15pt

\noindent{\bf AMS subject classifications (2000).}  
    {65M60, 65N30, 76D07.}

\end{abstract}

\section{Introduction}

The divergence-free finite element method is mainly for 
  solving incompressible flow problems,
  such as Stokes or Navier-Stokes equations, where
  the finite element space for the pressure is exactly the
    divergence of the finite element space for the velocity.
In such a method, the finite element velocity is divergence-free
   pointwise, i.e.
  the incompressibility condition is enforced strongly.
Traditional finite elements enforce the incompressibility
  weakly, cf. \cite{Raviart,BrezziD}.
That is, in order to satisfy the inf-sup stability condition,
   the incompressibility condition is weakened by either
   enlarging the velocity space or
   decreasing the pressure space.
This often leads to some sub-optimal methods, or a waste of
    computation, due to the imperfect matching of two spaces.
It may lead to inaccurate mass conservation, which is critical
   in certain computational problems.

A fundamental study on the divergence-free element method was done by
   Scott and Vogelius (\cite{Scott-Vogelius,Scott-V})
  that the $P_{k+1}$/$P_k^{dc}$ method
      is stable and consequently of the optimal
   order on 2D triangular grids, for  $k\ge 3$.
Here the velocity space is the continuous
   piecewise-polynomials of degree $(k+1)$ or less while
   the  the pressure space is the discontinuous 
   piecewise-polynomials of degree $k$ or less,
   or the divergence of the velocity, to be precise.
There are several other such divergence-free finite
   elements, cf. \cite{Arnold-Qin,Huang-Zhang,
       Qin,Qin-Zhang,Zhang-3D,Zhang-PS,Zhang-Q}.

Starting from the most popular element, the $Q_1$/$P_0$ element
	(\cite{Boland,Boland-Nicolaides}),
   there is a   series of work on $Q_k$ mixed finite elements on
   rectangular grids in 2D and 3D.
Brezzi and Falk showed that the $Q_{k+1}$/$Q_{k}^{dc}$ element is unstable
	in \cite{Brezzi-Falk}, for any $k\ge 0$.
Here $Q_{k}^{dc}$ denotes the space of discontinuous piecewise-polynomials.
In \cite{Stenberg-Suri}, Stenberg and Suri showed 
    the stability, but a sub-optimal
   order of approximation, for the $Q_{k+1}$/$Q_{k-1}^{dc}$ 
   element for all $k\ge 1$ in 2D.
Bernardi and Maday proved the stability and the optimal
   order of convergence for the  $Q_{k+1}$/$P_{k}^{dc}$ 
   element, cf. \cite{Bernardi-Maday}.
Ainsworth and Coggins established \cite{Ainsworth-Coggins} the 
    stability and the optimal
   order of convergence for the Taylor-Hood $Q_{k+1}$/$Q_{k}$ 
	element, where the pressure space is continuous too.
The Bernardi-Raugel element (\cite{Bernardi-Raugel})
    optimizes the $Q_{k+1}$/$Q_{k-1}^{dc}$ 
   element, when $k=1$,
    by reducing the velocity space to $Q_{1,2}$-$Q_{2,1}$
    polynomials.
Here the first component of velocity in the Bernardi-Raugel element
   is a polynomial of degree $1$ in $x$ direction, but of
    degree 2 in $y$ direction.
To be precise, the Bernardi-Raugel element enrich 
   the $Q_1$ velocity space by face-bubble functions.
Similar to the Bernardi-Raugel element, a
  divergence-free finite element, \Qe/$Q_{k}^{dc'}$ ($k\ge 2$),
     was proposed in 
   \cite{Zhang-Q}, which further optimizes the  Bernardi-Raugel element
	by increasing the polynomial degree of pressure 
        from $(k-1)$ to $k$.
The nodal degrees of freedom of this  divergence-free 
    element and the Bernardi-Raugel element are plotted in 
   Figure \mref{dof}.
This divergence-free 
    element was extended to its lowest-order form, $k=1$, i.e.,
    $Q_{2,1}$-$Q_{1,2}$/$Q_1^{dc'}$, in \cite{Huang-Zhang}.
Here the space $Q_{k}^{dc'}$ for the
   pressure is the space of discontinuous $Q_k$ polynomials with
   all spurious modes removed, i.e.,
   eliminating one degree of freedom at each vertex.
In the construction,  the pressure space is exactly the divergence 
  of the velocity.
Thus, the resulting finite element is divergence-free pointwise.
In such a case, the discrete pressure space can be omitted in the computation.
By an iterated penalty method,  we obtain the  pressure solution
   as a byproduct, cf. \cite{Zhang-PS} and
   Section \mref{s-numerical} below. 
However, by the standard finite element theory developed in 
    \cite{Huang-Zhang,Zhang-Q},  this divergence-free element converges
   at order $k$ only, due to a degree $k$ polynomial in $y$  
   for the first component of $\b u_h$.
This cannot be improved by the standard theory, where the optimal order
   of convergences is derived from the inf-sup stability.
In this manuscript,  we further study this \Qe divergence-free element
  and show its superconvergence, that it does converge at order $k+1$.
Further the velocity of the \Qe divergence-free element may
   be ultraconvergent, i.e., two orders higher than the standard
   convergence, provided the interpolation polynomial is divergence-free.
The extension of this divergence-free element to 3D is straightforward,
   so is its superconvergence property.
 
\begin{figure}[htb] \begin{center} \setlength{\unitlength}{0.6pt}
    \begin{picture}(400,200)(0,0) 
  \def\bo{\put(100,100){\line(0,-1){100}} \put(0,0){\line(1,0){100}}
   \put(100,100){\line(-1,0){100}} \put(0,0){\line(0,1){100}} }
  \def\ux{\begin{picture}(100,100)(0,0)\bo
 \multiput(0,0)(50,0){3}{\circle*{5}} \multiput(0,100)(50,0){3}{\circle*{5}} 
   \put(40,40){$Q_{2,1}$} \end{picture}}
    \def\uy{\begin{picture}(100,100)(0,0)\bo
 \multiput(0,0)(0,50){3}{\circle*{5}} \multiput(100,0)(0,50){3}{\circle*{5}} 
 	\put(40,40){$Q_{1,2}$}   \end{picture} }
 \put(-20,40){$\b u_h$:} \put(20,100){\ux} \put(140,100){\uy}
 \put(-20,150){$\b u_h$:} \put(20,-20){\uy} \put(140,-20){\ux} 
   \put(300,100){ \begin{picture}(100,100)(0,0)\bo  
 \multiput(10,10)(0,80){2}{\circle{6}}  \multiput(90,10)(0,80){2}{\circle{6}}
 	\put(-30,50){$p_h$:} \put(40,40){$Q_{1}$}  \end{picture} }
   \put(300,-20){ \begin{picture}(100,100)(0,0)\bo  \put(50,50){\circle{6}} 
 	\put(-30,50){$p_h$:} \put(12,40){$Q_{0}$}  \end{picture} }
 \end{picture}\end{center}
\caption{Nodes of $\b u_h$/$p_h$ for divergence-free (top)
      and Bernardi-Raugel elements. }   
\lab{dof}
\end{figure}
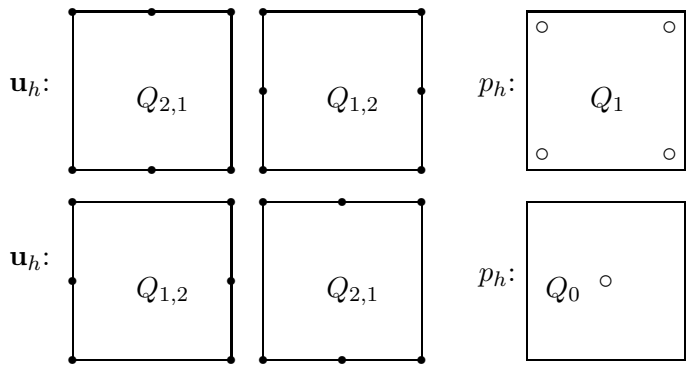 

The rest of the paper is organized as follows.
In Section 2, we define the finite element for the Stationary
   Stokes equations.
In Section 3, we establish a superconvergence for the divergence-free
   element.
In Section \mref{s-numerical},
   we provide some test results confirming the analysis.
In particular, we show the order of convergence of the divergence-free
   element is one higher than that of  the rotated
   Bernardi-Raugel element.

\section{The \Qe \ divergence-free element}

In this section, we shall define the divergence-free  finite element  for 
   the stationary Stokes equations on rectangular grids.
The resulting finite element solutions
   for the velocity are divergence-free point wise.

\def\grad{\nabla}
We consider a model 
     stationary Stokes problem:  Find the velocity 
  $\b u $  and the pressure $p$ on a 2D 
  polygonal domain $\Omega$, which can be subdivided into
    rectangles,  such that
  \e{\lab{e-2} \ad{ - \Delta \b u  + \grad p
            &=\b f \qquad && \hbox{in } \Omega, \\
                 \d \b u  &= 0 && \hbox{in } \Omega, \\
                 \b u  &= \b 0  && \hbox{on } \partial\Omega.  } }
The weak form for \meqref{e-2}
   is:  Find $\b u\in H_{0}^1(\Omega)^2$ and 
    $p\in L_{0}^2(\Omega):=L^2(\Omega)/C=\{ p\in L^2 \mid 
          \int_\Omega p = 0 \}$ such that 
\e{ \ad{  a(\b u,\b v)+b(\b v,p) &=(\b f,\b v)
             &&   \forall \b v \in H^1_0(\Omega)^2 , \\
       b(\b u,q)  &=0 &&  \forall q \in L^2_0(\Omega). }
   \lab{e-v} }
 Here $H^1_{0}(\Omega)^2$ is the subspace of
    the Sobolev space $H^1(\Omega)^2$ (cf. 
      \cite{Ciarlet})  with zero boundary trace, and
   \a{ a(\b u,\b v) &= \int_\Omega \grad \b u \cdot \grad \b v \; dx, \\
       b(\b v, p) &= -\int_\Omega \d \b v \; p \; dx, \\
        (\b f,\b v) &= \int_\Omega  \b f  \; \b v \; dx.}

   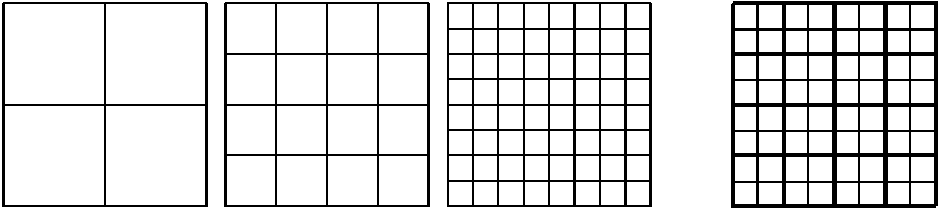
\begin{figure}[htb]\begin{center}\setlength\unitlength{1.2pt}
    \begin{picture}(300,60)(70,10)
 \put(70,0){\begin{picture}(70,64)(0,0)
           \multiput(0,0)(32,0){3}{\line(0,1){64}}
          \multiput(0,0)(0,32){3}{\line(1,0){64}}\end{picture}}
 \put(140,0){\begin{picture}(70,64)(0,0)
           \multiput(0,0)(16,0){5}{\line(0,1){64}}
          \multiput(0,0)(0,16){5}{\line(1,0){64}}\end{picture}}
 \put(210,0){\begin{picture}(70,64)(0,0)
           \multiput(0,0)(8,0){9}{\line(0,1){64}}
          \multiput(0,0)(0,8){9}{\line(1,0){64}}\end{picture}}

  \put(300.3,.3){\begin{picture}(70,64)(0,0)
           \multiput(0,0)(16,0){5}{\line(0,1){64}}
          \multiput(0,0)(0,16){5}{\line(1,0){64}}\end{picture}}
  \put(299.7,-.3){\begin{picture}(70,64)(0,0)
           \multiput(0,0)(8,0){9}{\line(0,1){64}}
          \multiput(0,0)(0,8){9}{\line(1,0){64}}\end{picture}}

 \end{picture}  
\end{center}
\caption{Three levels of grids, and a macro-element
    grid (for $k=1$ only).} 
\lab{f-grid}
\end{figure}

The finite element grids are defined by, cf. Figure \mref{f-grid},
   \a{ {\cal T}_h &=\left\{ K \mid \cup K =\overline\Omega, \
	 K=[x_a,x_b]\times[y_c, y_d]
   \  \hbox{ with size }  \
      h_K=\max\{x_b-x_a, y_d-y_c\}\le h \right\}. }
We further assume, only for the lowest-order element $k=1$ in \meqref{e-space},
   that the rectangles in grid ${\cal T}_h$ 
   can be combined into groups of four to form a macro-element
   grid:
  \a{ {\cal M}_h &=\left\{ M \mid M=\cup_{i=1}^4 K_i 
	=[x_{i-1},x_{i+1}]\times [y_{j-1}, y_{j+1}], \
	K_i \in {\cal T}_h, \ \cup K_i =\Omega \right\}. }
See the 4th diagram in Figure \mref{f-grid}.
The polynomial spaces are  defined by
  \a{ Q_{k,l}=\{ \sum_{i\le k, j\le l} c_{ij} x^i y^j\}, \qquad
     Q_{k} = Q_{k,k}. }
The \Qe ($k\ge 1$)   element spaces are 
  \an{\lab{e-space} 
    \b V_h &= \left\{ \b v_h \in C(\Omega)^2  \mid 
         \left. \b v_h \right|_{K}\in Q_{k+1,k}\times Q_{k,k+1}
       \ \forall K\in {\cal T}_h,
    \hbox{ and } \ \left.\b u_h \right|_{\partial\Omega} =0 \right\}, \\
 \lab{e-pressure}
      P_h &= \left\{ \d \b u_h \mid \b u_h \in \b V_h \right\}. }
Since $\int_\Omega p_h = \int_\Omega \d \b u_h =\int_{\partial\Omega} \b u_h
    =0$
   for any $p_h\in P_h$,  we conclude that
     \a{ \b V_h \subset   H_0^1(\Omega)^2,\quad
        P_h \subset L_0^2(\Omega), }
 i.e., the mixed-finite element pair is conforming.
The resulting system of finite element equations for \meqref{e-v} is:
   Find $\b u_h\in \b V_h$ and  $p_h\in P_h$ such that 
\e{ \ad{  a(\b u_h,\b v)+b(\b v,p_h) &=(\b f,\b v)
             &&   \forall \b v \in \b V_h, \\
       b(\b u_h,q)  &=0 &&  \forall q \in P_h. }
   \lab{e-finite} }

Traditional mixed-finite elements require the inf-sup condition
   to guarantee the existence of discrete solutions.
As \meqref{e-pressure} provides a compatibility between the
   discrete velocity and the discrete pressure spaces,  
the linear system of equations \meqref{e-finite} always has a
   unique solution, cf. \cite{Zhang-PS}. 
Furthermore, such a solution $\b u_h$ is divergence-free:
by the second equation in \meqref{e-finite} and the definition of $P_h$ in
    \meqref{e-pressure},  
   \e{\lab{e-u1} b(\b u_h, q) =
         b(\b u_h, -\d \b u_h) = \| \d \b u_h \|_{L^2(\Omega)^2}^2
    =0. } 
In this case, i.e., $\b V_h\subset \b Z:=\{\d\b v \ | \
	\b v  \in H^1_0(\Omega)^2 \} $,
   we call the mixed finite element a divergence-free element.
It is apparent that the discrete velocity solution is divergence-free
   if and only if the discrete pressure finite element space is the
   divergence of the discrete velocity finite element space, i.e.,
  \meqref{e-pressure}. 

We note that by \meqref{e-pressure}, $P_h$ is a subspace of discontinuous,
   piecewise bilinear polynomials.  
As  singular vertices are present
  (see \cite{Scott-Vogelius,Scott-V,Huang-Zhang,Zhang-Q}),
    $P_h$ is a proper subset
  of the discontinuous piecewise $Q_1$ polynomials.
It is possible, but difficult to find a local basis for $P_h$.
But on the other side, it is the special interest of the divergence-free
   finite element method
   that the space $P_h$ can be omitted in computation and the discrete
   solutions approximating the pressure function in the Stokes 
   equations can be obtained as byproducts, 
   if an iterated penalty method is adopted to solve
   the system \meqref{e-finite}, 
  cf. \cite{book-Fortin,BrezziD,Brenner-Scott,Scott-penalty,Zhang-PS} 
   for more information.

\section{Superconvergence }

As usual, the superconvergence is obtained by the method of
   integration by parts, cf. \cite{Chen-Huang,Yan}.
But we have a long series of lemmas dealing with each term
   in the bilinear forms $a(\cdot,\cdot)$ and $b(\cdot,\cdot).$

For a convenience in referring components of the vector
  velocity,  we define the two inhomogeneous polynomial spaces:
  \an{\lab{V1}
	 V_{h,1} &=\{ \phi\in H^1_0(\Omega) \mid
	 \phi|_K \in Q_{k+1,k} \ \forall K\in {\cal T}_h \}, \\
	\lab{V2}
	 V_{h,2} &=\{ \phi\in H^1_0(\Omega) \mid
	 \phi|_K \in Q_{k,k+1} \ \forall K\in {\cal T}_h \},}
$k\ge 1$.  That is, 
   \a{ \b V_h = V_{h,1} \times V_{h,2}, \quad k\ge 1. }

\def\squ#1{\begin{picture}(100,100)(0,0)
    \multiput(0,0)(100,0){2}{\multiput(0,0)(0,100){2}{\circle*{4}}}
  \put(100,100){\line(0,-1){100}} \put(0,0){\line(1,0){100}}
   \put(100,100){\line(-1,0){100}} \put(0,0){\line(0,1){100}}#1\end{picture}}
\begin{figure}[htb] \begin{center} \setlength{\unitlength}{0.8pt}
    \begin{picture}(400,110)(-30,0) 
   \def\di{\hskip-2pt\vrule height2pt width4pt depth2pt}
 \put(-50,0){\squ{\put(40,-12){$Q_{3,2}$}
   \multiput(0,0)(100,0){2}{\multiput(0,0)(0,100){2}{\circle*{2}}}
   \multiput(0,0)(100,0){2}{\multiput(0,0)(0,100){2}{\circle{7}}}
  \multiput(0,33.33)(100,0){2}{\multiput(0,0)(0,33.33){2}{\circle{5}}}
    \multiput(25,0)(25,0){3}{\multiput(0,0)(0,100){2}{\circle*{4}}}
    \multiput(25,33.33)(25,0){3}{\multiput(0,0)(0,33.33){2}{\di}}}}
  \put(100,0){ \squ{ \put(40,-12){$Q_{2,3}$}
   \multiput(0,0)(100,0){2}{\multiput(0,0)(0,100){2}{\circle{7}}}
    \multiput(33.33,0)(33.33,0){2}{\multiput(0,0)(0,100){2}{\circle{5}}}
    \multiput(0,25)(0,25){3}{\multiput(0,0)(100,0){2}{\circle*{4}}}
    \multiput(31.33,23)(33.33,0){2}{\multiput(0,0)(0,25){3}{$\diamond$}}}}
  \put(240,80){\circle*4}\put(240,80){\circle7\quad$u_I(a_i^K)=u(a_i^K)$}
  \put(240,60){\circle*{4}}\put(240,60){\quad\ $\int u_I p_2ds=\int u p_2ds$}
  \put(240,40){\circle5\quad$\int u_I p_1ds=\int u p_1ds$}
   \put(240,20){\di\quad$\int_K u_I q_{2,1}d\b x=\int_K u q_{2,1}d\b x$}
   \put(238,-2){$\diamond$}
    \put(240,0){\quad$\int_K u_I q_{1,2}d\b x=\int_K u q_{1,2}d\b x$}
 \end{picture}\end{center}
\caption{ Three types of interpolation nodes,
    $k=3$. }   
\lab{nodes}
\end{figure}
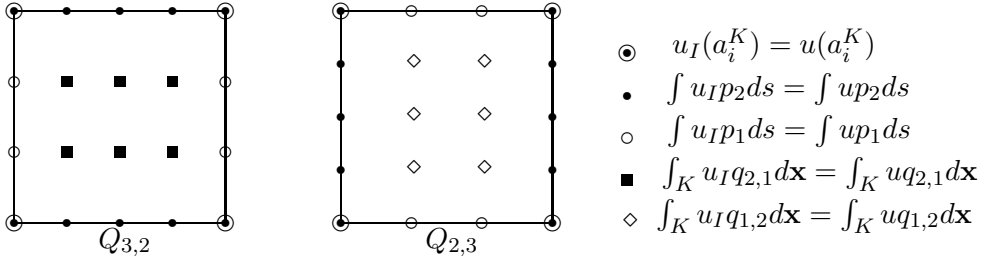

The interpolation operator $\b I_h$ is defined for the two components of
   $\b u$:
 \an{ \nonumber \b I_h : H^1_0(\Omega)\times  H^1_0(\Omega) \to
         V_{h,1} \times V_{h,2}, \\
      \b I_h \b u =  \b I_h \p{u \\ v} = \p{ u_I \\ v_I }.
	\lab{interpolation} }
 To define $u_I$ at the Lagrange nodes,  
   we define its vertex nodal values, then internal
   edge values, and finally internal values, by solving the
   following equations sequentially (see Figure \mref{nodes}):
    \an{\lab{int-n}  (u-u_I)(a_i^K)&=0 &&\hbox{at four
	 vertices of $K, \ \forall K\in {\cal T}_h$}, \\ 
	\lab{int-k-1}\int_{y=y_j} (u-u_I) p_{k-1}(x) dx & =
	    0 &&
	\hbox{ on the top and bottom edges of $K$},\\
	\lab{int-k-2}\int_{x=x_i} (u-u_I) p_{k-2}(y) dy & =
	   0 &&
	\hbox{ on the left and right edges of $K$. }\\
 	\lab{int-1-2}\int_{K}(u- u_I) q_{k-1,k-2}d\b x & =
	    0 &&
	\hbox{ on the square  $K$},  }
  where $p_k\in P_k$, the space of 1D polynomials of degree $k$ or less, 
    and $q_{k,l}\in Q_{k,l}$.
 By rotating $x$ and $y$,  $v_I$ is defined similarly/symmetrically to $u_I$.

\begin{lemma} (two-order superconvergence) \lab{l-sup1}
   For any $Q_{k+1,k}$ function $\psi\in V_{h,1}$, defined in \meqref{V1},
	  for any $u\in H^{k+3}(\Omega)$, and for all $k>1$, 
     \e{\lab{sup-xx}
      |\int_\Omega (u-u_I)_x \psi_x d\b x|
	\le  Ch^{k+2}\|u\|_{H^{k+3}} \|\psi\|_{H^1}. }
\end{lemma}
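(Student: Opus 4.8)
The plan is to prove \meqref{sup-xx} element by element and then sum. It suffices to show, on each $K=[x_a,x_b]\times[y_c,y_d]$,
\[
  \int_K (u-u_I)_x\,\psi_x\,d\b x=\int_K W_x\,\psi_y\,d\b x+(\text{terms on the vertical edges of }K),
\]
for an auxiliary function $W$ (introduced below) with $\|W_x\|_{L^2(K)}\le Ch^{k+2}\|u\|_{H^{k+3}(K)}$, and then to check that the edge terms telescope away when summed over ${\cal T}_h$; a final Cauchy--Schwarz over the elements, using $\sum_K\|u\|_{H^{k+3}(K)}^2=\|u\|_{H^{k+3}}^2$ and $\sum_K\|\psi\|_{H^1(K)}^2=\|\psi\|_{H^1}^2$, then yields \meqref{sup-xx}. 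The key structural fact is that, on a single $K$, the operator $\b I_h$ acting on the first component factors as a tensor product, $u_I|_K=(I_x\otimes I_y)u$, where $I_x$ is the one-dimensional degree-$(k+1)$ interpolation in $x$ determined by the two endpoint values and the moments against $P_{k-1}$, and $I_y$ the degree-$k$ interpolation in $y$ determined by the two endpoint values and the moments against $P_{k-2}$; this follows by matching the conditions \meqref{int-n}--\meqref{int-1-2} against the tensor data, and it is here that $k>1$ is used. Write accordingly
\[
  u-u_I=(u-I_yu)+(I_yu-I_xI_yu)=:\eta_1+\eta_2 .
\]

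First, the $\eta_2$ part contributes nothing. For each fixed $y$, $\eta_2(\cdot,y)=(I-I_x)\bigl((I_yu)(\cdot,y)\bigr)$ is the degree-$(k+1)$ interpolation error in $x$, hence it vanishes at $x=x_a,x_b$ and is $L^2_x$-orthogonal to $P_{k-1}$; since $\psi\in Q_{k+1,k}$ gives $\psi_{xx}(\cdot,y)\in P_{k-1}$, one integration by parts in $x$ kills it,
\[
  \int_{x_a}^{x_b}(\eta_2)_x\,\psi_x\,dx=\bigl[\eta_2\psi_x\bigr]_{x_a}^{x_b}-\int_{x_a}^{x_b}\eta_2\,\psi_{xx}\,dx=0 ,
\]
so $\int_K(\eta_2)_x\psi_x\,d\b x=0$. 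For $\eta_1$, because $I_y$ acts only in $y$ it commutes with $\partial_x$, so $(\eta_1)_x=(I-I_y)(u_x)$. Introduce its $y$-antiderivative
\[
  W(x,y):=\int_{y_c}^{y}(I-I_y)(u_x)(x,t)\,dt .
\]
Then $W_y=(\eta_1)_x$; $W$ vanishes on the bottom edge of $K$ by construction and on the top edge because the constant function lies in $P_{k-2}$ (again using $k>1$); and $W$ is single-valued across interior vertical edges, since it depends on $u$ only through the trace of $u_x$ on the edge and through the common $y$-interval of the two adjacent elements.

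Now integrate by parts twice. Using $W_y=(\eta_1)_x$ and the vanishing of $W$ on the horizontal edges of $K$,
\[
  \int_K(\eta_1)_x\,\psi_x\,d\b x=\int_K W_y\,\psi_x\,d\b x=-\int_K W\,\psi_{xy}\,d\b x ,
\]
and then, integrating by parts in $x$,
\[
  -\int_K W\,\psi_{xy}\,d\b x=\int_K W_x\,\psi_y\,d\b x
   -\int_{y_c}^{y_d}\!\bigl(W\psi_y\bigr)(x_b,y)\,dy+\int_{y_c}^{y_d}\!\bigl(W\psi_y\bigr)(x_a,y)\,dy .
\]
Summed over $K\in{\cal T}_h$, the edge integrals cancel: on an interior vertical edge both $W$ and the tangential derivative $\psi_y$ are single-valued while the two adjacent elements have opposite outward normals, and on a vertical part of $\partial\Omega$ one has $\psi=0$, hence $\psi_y=0$. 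What remains is $\sum_K\int_K W_x\,\psi_y\,d\b x$, and since $W_x=\int_{y_c}^{y}(I-I_y)(u_{xx})(x,t)\,dt$, Cauchy--Schwarz in $t$ together with the standard one-dimensional degree-$k$ interpolation error estimate in $y$ gives
\[
  \|W_x\|_{L^2(K)}\le Ch\,\|(I-I_y)(u_{xx})\|_{L^2(K)}\le Ch^{k+2}\|u\|_{H^{k+3}(K)} ,
\]
whereas $\|\psi_y\|_{L^2(K)}\le\|\psi\|_{H^1(K)}$ with no inverse estimate; summing over $K$ finishes the proof.

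The step I expect to be the crux is this last double integration by parts with the global cancellation of the edge terms. The extra power of $h$ must be produced without any inverse estimate, which is why exactly one derivative is shifted onto $\psi$ in $x$ while the other is kept on $\psi$ in $y$, so that only $\|\psi\|_{H^1}$ — not an inverse-scaled seminorm — ever appears; the price is that one must carry the continuity of the antiderivative $W$ (the object carrying the gained factor of $h$) and verify that $W$ inherits from the exact solution enough continuity for the vertical-edge contributions to telescope across the mesh. The supporting facts that need care are the tensor-product identity $u_I|_K=(I_x\otimes I_y)u$ and the vanishing of $W$ on the top edge of $K$, both of which break down at $k=1$.
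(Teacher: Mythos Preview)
Your argument is correct and takes a genuinely different route from the paper's proof. The paper expands the \emph{test function} $\psi_x$ as a Taylor polynomial in $y$, replaces the top monomials $y^{k-1}$ and $y^k$ by high derivatives of the auxiliary polynomial $s_k(y)=(y^2-1)^{k+1}/(2k+2)!$, and then integrates by parts in $y$ a total of $k{+}1$ (and for one term $k{+}2$) times, with boundary terms killed by the zeros of $s_k$ and the interpolation conditions; only in the very last step is one integration by parts in $x$ performed, producing an edge term that cancels across neighboring elements. Your approach instead exploits the tensor-product structure of $\b I_h$: you split $u-u_I=(I-I_y)u+(I-I_x)I_yu$, kill the second piece in one stroke by a single $x$-integration by parts against $\psi_{xx}\in P_{k-1}$, and for the first piece introduce the $y$-antiderivative $W$ so that only \emph{two} integrations by parts (one in $y$, one in $x$) are needed, with the extra factor of $h$ coming from a standard one-dimensional degree-$k$ interpolation estimate on $(I-I_y)u_{xx}$. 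The upshot is that your proof is shorter and more modular, and it isolates exactly the derivative $u_{x^2y^{k+1}}$ (the paper ends up with both $u_{x^2y^{k+1}}$ and $u_{xy^{k+2}}$); the paper's approach, on the other hand, is the classical Lin-type superconvergence machinery, which has the advantage of being reusable verbatim for the later mixed terms in $b(\cdot,\cdot)$.

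One small over-claim: the tensor-product identity $u_I|_K=(I_x\otimes I_y)u$ does \emph{not} require $k>1$; it holds for $k=1$ as well (then $I_y$ is simply linear endpoint interpolation and the conditions \meqref{int-k-2} and \meqref{int-1-2} are vacuous). The place where $k>1$ is genuinely needed in your argument is exactly the one you also identify, namely $W(x,y_d)=\int_{y_c}^{y_d}(I-I_y)(u_x)\,dy=0$, which uses that constants lie in $P_{k-2}$. This matches the paper, where $k>1$ enters only at the final step in order to bound $\psi_{y^{k-1}}$ by $|\psi|_{H^1}$.
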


 \begin{proof}    
  We first consider the estimation on the reference element
   $\hat K=[-1,1]^2$.
  Since $\psi\in Q_{k+1,k}$, we have an exact Taylor expansion:
 \an{ \lab{taylor}
	 \psi_x (x, y) = \psi_x(x, 0) + y \psi_{xy}(x,0)
	+\cdots+ \frac{y^{k-1}}{(k-1)!}\psi_{xy^{k-1}}(x,0)
	+ \frac{y^k}{k!}\psi_{xy^k}(x,0),}
  where $\psi_x(x, 0) $ and all $\psi_{xy^j}(x,0)$ are $P_k$ polynomials
    in $x$ only.
    We will perform the integration by parts repeatedly. First, 
	for the lower order terms in \meqref{taylor}, we 
   notice that, by the definition of $u_I$ in 
	\meqref{int-k-1} and  \meqref{int-1-2},
  \an{ \nonumber &\int_{\hat K} (u-u_I)_x y^j\psi_{xy^j}(x,0) d\b x 
    \\ = & \int_{-1}^1 (u-u_I)y^j\psi_{xy^j}(x,0)|_{x=-1}^{x=1}dy
	 \nonumber-\int_{\hat K} (u-u_I)y^j\psi_{x^2y^j}(x,0) d\b x \\
	 \lab{k-2-terms} 
       = & 0 \qquad \hbox{ when } j=0, 1, \dots, k-2. }
  Please be aware that $\psi_{x^2y^j}(x,0)\in P_{k-1}(x)$.
 Hence, we need to deal with only the last two terms in \meqref{taylor}.

For the last two terms in \meqref{taylor}, in order to do integration by parts, 
   we express   polynomials $y^{k-1}$ and $y^k$ by derivatives
   of another polynomial. 
   \an{  \lab{s-k} s_k(y) &=\frac {(y^2-1)^{k+1}}{(2k+2)!}  
	    = \frac { y^{2k+2} }{(2k+2)!} 
	    - \frac {(k+1) y^{2k} }{(2k+2)!} + \cdots = 
	   \frac { y^{2k+2} }{(2k+2)!} + \tilde p_{2k}(y) , \\
	\lab{s-j-b}
	  s_k^{(j)}(\pm1)&= 0, \quad j=0,1,\cdots,k, \\
	\lab{s-yk}  s_k^{(k+2)}(y) &= \frac 1{k!} y^k +  p_{k-2}(y),  
    }
 Here $\tilde p_{2k}(y)$ and $ p_{k-2}(y)$ denote a polynomial
   of degree $2k$ and $(k-2)$, respectively.
 We note that, as in \meqref{k-2-terms}, the integral of $(u-u_I)_x$ against
    $p_{k-2}(y)$ is zero.
 Thus, surprisingly simple, we have
 \an{ \nonumber
     &\quad \int_{\hat K} (u-u_I)_x \psi_x(x,y) dx dy
   \\ &= \int_{\hat K} (u-u_I)_x (s_{k-1}^{(k+1)}(y)\psi_{xy^{k-1}}(x, 0) 
	\nonumber		+ s_{k }^{(k+2)}(y) \psi_{xy^k}(x,0))dxdy\\
    & =   \int_{-1}^1 \left[(u-u_I)_x (s_{k-1}^{(k)}(y) \psi_{xy^{k-1}}(x, 0)
   \nonumber + s_{k }^{(k+1)}(y) \psi_{xy^{k}}(x,0) )
		\right]_{y=-1}^{y=1}dx    \\
    &\quad - \int_{\hat K}(u-u_I)_{xy} (s_{k-1}^{(k)}(y)  \psi_{xy^{k-1}}(x, 0)
      +  s_{k }^{(k+1)}(y)   \psi_{xy^k}(x,0))dxdy.
   \lab{ux-1}  
   } 
Let us consider the first boundary integral in \meqref{ux-1},
    on the top edge of the square $\hat K$.
  By \meqref{int-n} and \meqref{int-k-1},  
  \an{  & \nonumber
      \int_{-1}^1 (u-u_I)_x(x,1)s_{k-1}^{(k)}(1) \psi_{xy^{k-1}}(x, 0) dx \\
	=&  \left[ (u-u_I)(x,1) s_{k-1}^{(k)}(1)
		 \psi_{xy^{k-1}}(x, 0)\right]_{x=-1}^1
	\nonumber \\
	&  \quad   - s_{k-1}^{(k)}(1) \int_{-1}^1 (u-u_I) (x,1) 
		 \psi_{x^2y^{k-1}}(x, 0) dx  
	=   0 , \lab{b-0}   }
  noting again that $\psi_{x^2y^{k-1}}(x,0)$ is a $P_{k-1}$ polynomial 
    in $x$ only.
The other   boundary integral in \meqref{ux-1} is also $0$ as
	$\psi_{x^2y^{k}}(x,0)\in P_{k-1}$ too:
  \a{  & \quad
      \int_{-1}^1 (u-u_I)_x(x,1)s_{k}^{(k+1)}(1) \psi_{xy^{k}}(x, 0) dx \\
	 & =  \left[ (u-u_I)(x,1) s_{k}^{(k+1)}(1)
		 \psi_{xy^{k}}(x, 0)\right]_{x=-1}^1
	\nonumber \\
	&  \quad   - s_{k}^{(k+1)}(1) \int_{-1}^1 (u-u_I) (x,1) 
		 \psi_{x^2y^{k}}(x, 0) dx  
	=   0 .   }
That is the boundary integrals in \meqref{ux-1} are all zero.
We repeat the integration by parts in this direction, while 
   the boundary terms would be zero by \meqref{s-j-b} and \meqref{int-k-1}.
By the integration by parts $k$ times more,
  \meqref{ux-1}    would be
  \an{    &\quad \nonumber \int_{\hat K} (u-u_I)_x \psi_x dx dy
    \\  &  \nonumber 
         =   - \int_{\hat K}(u-u_I)_{xy} (s_{k-1}^{(k)}  \psi_{xy^{k-1}}(x, 0)
      +  s_{k }^{(k+1)}   \psi_{xy^k}(x,0))dxdy
  \\  &  \nonumber 
         =  \int_{\hat K}(u-u_I)_{xy^2} (s_{k-1}^{(k-1)}  \psi_{xy^{k-1}}(x, 0)
      +  s_{k }^{(k)}   \psi_{xy^k}(x,0))dxdy
    \\ &  \nonumber 
        \quad -  \int_{-1}^1 (u-u_I)_{xy}(x,1)
	(s_{k-1}^{(k-1)}  \psi_{xy^{k-1}}(x, 0)
      +  s_{k }^{(k)}   \psi_{xy^k}(x,0))_{y=-1}^{y=1}dx
  \\  &  \nonumber 
       = \int_{\hat K}(u-u_I)_{xy^2} (s_{k-1}^{(k-1)}  \psi_{xy^{k-1}}(x, 0)
      +  s_{k }^{(k)}   \psi_{xy^k}(x,0))dxdy
  \\ &  
       =  (-1)^{k+1} \int_{\hat K}(u-u_I)_{xy^{k+1}} 
	(s_{k-1}  \psi_{xy^{k-1}}(x, 0)
      +  s_{k }'  \psi_{xy^k}(x,0))dxdy. \lab{ux-last} }
We will perform the integration by parts one last time.  
But this time, we will treat the two terms in the last integral
   differently.
\a{ \int_{\hat K}(u-u_I)_{xy^{k+1}} 
	 s_{k-1}  \psi_{xy^{k-1}}(x, 0) dxdy 
   &= -\int_{\hat K}(u-u_I)_{x^2y^{k+1}} 
	 s_{k-1}  \psi_{y^{k-1}}(x, 0) dxdy \\
   &\quad +\int_{-1}^1 \left[ (u-u_I)_{xy^{k+1}} s_{k-1}  \psi_{y^{k-1}}(x, 0)
	\right]_{x=-1}^{x=1} dy, \\
  \int_{\hat K}(u-u_I)_{xy^{k+1}} 
	 s_{k}'  \psi_{xy^{k}}(x, 0) dxdy 
   &= -\int_{\hat K}(u-u_I)_{xy^{k+2}} 
	 s_{k}  \psi_{xy^{k}}(x, 0) dxdy. }
For the second integral, the boundary term disappears by the condition
   \meqref{s-j-b}.
For the first integral, we note that the boundary integrals will
   be cancelled due to the opposite line integrals on two
   sides of the vertical edge $x=x_i$ or due to 
     the boundary condition on $\psi$.
We note also that the $(k+1)$st and $(k+2)$nd
    partial derivatives on $u_I$ above are all zero.
Hence, we get \meqref{sup-xx} by summing over the estimation on
  all rectangles $K\in {\cal T}_h$, plus a scaling and the
  Schwartz inequality, 
  \a{  & \left|\int_\Omega (u-u_I)_x \psi_x d\b x\right|
     \\=&\left|\sum_{K} \int_K (u-u_I)_x \psi_x d\b x \right|
	=\left|\sum_{K} \int_{\hat K} (u-u_I)_x \psi_x d\b x \right| \\
  = &\left|\sum_{K} (-1)^{k+2} \int_{\hat K}\left( u_{x^2y^{k+1}} 
	 s_{k-1}  \psi_{y^{k-1}} 
	+  u_{xy^{k+2}}  s_{k}  \psi_{xy^{k}} \right) d\b x \right| \\
    \le &\sum_K  C |u |_{H^{k+3}(\hat K)}  |\psi  |_{H^1(\hat K)} 
      = C \sum_K h^{k+2}  |u |_{H^{k+3}( K)}    |\psi |_{H^1( K)}
    \\ \le & C h^{k+2} |u |_{H^{k+3}( \Omega)} |\psi  |_{H^1(\Omega)}. }
 We note that the semi $H^1$-norm is needed above to bound
    $\psi_{y^{k-1}}$. 
 Thus $k>1$ is required.
  \end{proof}

 In the proof,  we can see that the decrease of one degree polynomial in 
   $y$ does not change the super-approximation of $Q_{k+1,k}$ in $x$
	direction.
 After \meqref{ux-last}, if we skip the last step of integration by parts,
  we would get the following corollary. 
  That is, we avoid $|\psi_{y^{k-1}}|_{L^2}$ when $k=1$ which
  cannot be bounded by $|\psi|_{H^1}$.
 
\begin{corollary} (one-order superconvergence)  \lab{c-sup1} 
      For any $Q_{k+1,k}$ function $\psi\in V_{h,1}$, defined in \meqref{V1},
	  for any $u\in H^{k+2}(\Omega)$, and for all $k\ge 1$,
     \e{\lab{sup-xx-1}
      |\int_\Omega (u-u_I)_x \psi_x d\b x|
	\le  Ch^{k+1}\|u\|_{H^{k+2}} \|\psi\|_{H^1}. }
    \qed
\end{corollary}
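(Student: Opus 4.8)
The plan is to follow the same integration-by-parts machinery used in Lemma \ref{l-sup1}, but to stop one step earlier so that the weaker regularity $u\in H^{k+2}$ and the borderline case $k=1$ can both be accommodated. First I would reduce to the reference element $\hat K=[-1,1]^2$ and write down the exact Taylor expansion \eqref{taylor} of $\psi_x$ in the $y$-variable, observing that the low-order terms $y^j\psi_{xy^j}(x,0)$ with $j\le k-2$ integrate against $(u-u_I)_x$ to zero exactly, by \eqref{int-k-1} and \eqref{int-1-2}, just as in \eqref{k-2-terms}. This leaves only the two top-order terms in $\psi_x$, which I rewrite using the auxiliary polynomials $s_{k-1}$ and $s_k$ from \eqref{s-k}--\eqref{s-yk}, exactly as in the proof of Lemma \ref{l-sup1}.

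Next I would carry out the repeated integration by parts in the $y$-direction, $k+1$ times in total, arriving precisely at the identity \eqref{ux-last}, whose boundary terms all vanish by \eqref{s-j-b}, \eqref{int-n}, and \eqref{int-k-1} (and, on interior vertical edges and the outer boundary, by cancellation of opposite line integrals or the homogeneous boundary condition). The key point of departure is that I now do \emph{not} perform the final integration by parts that in Lemma \ref{l-sup1} moved one more $x$-derivative onto $u-u_I$ at the cost of producing $\psi_{y^{k-1}}$ (which cannot be controlled by $|\psi|_{H^1}$ when $k=1$). Instead I estimate \eqref{ux-last} directly: the integrand is $(u-u_I)_{xy^{k+1}}$ times a bounded polynomial times either $\psi_{xy^{k-1}}(x,0)$ or $\psi_{xy^k}(x,0)$, and I bound $|(u-u_I)_{xy^{k+1}}|_{L^2(\hat K)}$ by the interpolation error estimate, which only needs $u\in H^{k+2}$, and $|\psi_{xy^{k-1}}(x,0)|$, $|\psi_{xy^k}(x,0)|$ by $|\psi|_{H^1(\hat K)}$ via an inverse-type / trace bound on the fixed reference element (these are derivatives of a fixed-degree polynomial restricted to the line $y=0$, hence equivalent to any norm of $\psi$, and in particular bounded by the full $H^1$ norm including the $L^2$ part).

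Finally I would undo the scaling: summing $\int_{\hat K}$ over all $K\in\mathcal T_h$, the change of variables from $K$ to $\hat K$ contributes the factor $h^{k+1}$ (one power of $h$ less than in Lemma \ref{l-sup1} because one fewer derivative has been transferred onto $u-u_I$: the relevant term is now $u_{xy^{k+1}}$ rather than $u_{x^2y^{k+1}}$ and $u_{xy^{k+2}}$), apply Cauchy--Schwarz over the elements, and obtain
\[
 \Bigl|\int_\Omega (u-u_I)_x\psi_x\,d\b x\Bigr|
 \le Ch^{k+1}|u|_{H^{k+2}(\Omega)}\,|\psi|_{H^1(\Omega)}
 \le Ch^{k+1}\|u\|_{H^{k+2}}\,\|\psi\|_{H^1},
\]
which is \eqref{sup-xx-1}. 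The main obstacle to watch is the bookkeeping of the boundary terms during the $k+1$ rounds of integration by parts and, in the $k=1$ degenerate case, verifying that stopping at \eqref{ux-last} genuinely avoids $\psi_{y^{k-1}}=\psi$ appearing without a derivative in a way that would force the use of $\|\psi\|_{L^2}$ with the wrong power of $h$; but since $\psi_{xy^{k-1}}(x,0)$ and $\psi_{xy^k}(x,0)$ always carry at least one $x$-derivative, they are legitimately controlled by $|\psi|_{H^1}$ even when $k=1$, which is exactly why the corollary holds for all $k\ge1$.
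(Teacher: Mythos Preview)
Your proposal is correct and follows essentially the same approach as the paper: the paper's proof consists precisely of the remark that one follows the proof of Lemma \ref{l-sup1} up to identity \eqref{ux-last} and then skips the final integration by parts (the one that would produce $\psi_{y^{k-1}}$), estimating directly so that only $u_{xy^{k+1}}\in H^{k+2}$ regularity is used and $\psi_{xy^{k-1}}(x,0)$, $\psi_{xy^k}(x,0)$ are bounded by $|\psi|_{H^1(\hat K)}$ via norm equivalence on the reference element. Your parenthetical remark about cancellation on interior vertical edges is unnecessary here (that device is used only in the final step of Lemma \ref{l-sup1}, which you are skipping), but it does no harm.
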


Symmetrically, switching $x$ and $y$ in Lemma \mref{l-sup1},
   we prove the following lemma.

\begin{lemma} (two-order superconvergence) \lab{l-sup2} 
      For any $Q_{k,k+1}$ function $\psi\in V_{h,2}$, defined in \meqref{V2},
	and for any $u\in H^{k+3}(\Omega)$, if $k>1$,
     \e{\lab{sup-yy}
      |\int_\Omega (u-u_I)_y \psi_y d\b x|
	\le  Ch^{k+2}\|u\|_{H^{k+3}} \|\psi\|_{H^1}. }
    \qed
\end{lemma}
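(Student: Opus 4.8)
The plan is to mirror the argument of Lemma~\mref{l-sup1} under the substitution $x\leftrightarrow y$. Recall that in Lemma~\mref{l-sup1} the first component $\psi\in V_{h,1}$ carries the higher polynomial degree in $x$ (it is a $Q_{k+1,k}$ polynomial), so the Taylor expansion in \meqref{taylor} was performed in the low-degree variable $y$, and the ``free'' integration by parts happened in the high-degree variable $x$. For the present lemma, $\psi\in V_{h,2}$ is a $Q_{k,k+1}$ polynomial, so the roles switch: $\psi$ has degree $k+1$ in $y$ and degree $k$ in $x$, and correspondingly the interpolation operator $v_I$ on the second component was defined ``by rotating $x$ and $y$'' in \meqref{int-n}--\meqref{int-1-2} — that is, $v_I$ matches vertex values, matches $\int_{x=x_i}(u-v_I)p_{k-1}(y)\,dy=0$ on the left/right edges, matches $\int_{y=y_j}(u-v_I)p_{k-2}(x)\,dx=0$ on the top/bottom edges, and matches the interior moments against $Q_{k-2,k-1}$. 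So the first step is simply to state that under the relabeling $(x,y)\mapsto(y,x)$, $V_{h,2}$ becomes the analogue of $V_{h,1}$, $v_I$ becomes the analogue of $u_I$, and the quantity $\int_\Omega (u-u_I)_y\psi_y\,d\b x$ in \meqref{sup-yy} becomes exactly the quantity $\int_\Omega (u-v_I)_x\psi_x\,d\b x$ of \meqref{sup-xx} with the variables renamed.

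Next I would carry out, on the reference square $\hat K=[-1,1]^2$, the same four moves as in the proof of Lemma~\mref{l-sup1}, now in the swapped variables. First, expand $\psi_y(x,y)$ in an exact Taylor series in $x$ about $x=0$, the remainder terminating at the $x^k$ term since $\psi$ is degree $k$ in $x$; the coefficients $\psi_{x^j y}(0,y)$ are $P_k$ polynomials in $y$ only. The lower-order terms $x^j$ for $j=0,\dots,k-2$ integrate to zero against $(u-v_I)_y$, exactly as in \meqref{k-2-terms}, using the edge moment conditions $\int_{x=x_i}(u-v_I)p_{k-1}(y)\,dy=0$ together with the interior moment conditions against $Q_{k-2,k-1}$ (after one integration by parts in $y$ the test function drops to $P_{k-1}$ in $y$). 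For the top two terms $x^{k-1}$ and $x^k$, introduce the same auxiliary one-dimensional polynomials $s_{k-1}(x)$ and $s_k(x)$ from \meqref{s-k}--\meqref{s-yk}, now in the variable $x$, whose derivatives up to order $k$ vanish at $x=\pm1$ and whose high-order derivatives reproduce $x^{k-1}/(k-1)!$ and $x^k/k!$ up to a $p_{k-2}$ remainder (which again integrates away). Then integrate by parts $k+1$ times in the $x$-direction: at every stage the boundary terms on the left and right edges $x=\pm1$ vanish — either by the vanishing of $s_{k-1}^{(j)}(\pm1)$ and $s_k^{(j)}(\pm1)$ via \meqref{s-j-b}, or, at the first stage, by the vertex-matching \meqref{int-n} and edge-moment \meqref{int-k-1}-type conditions that make $(u-v_I)(\pm1,y)$ orthogonal to the relevant $P_{k-1}$ polynomials in $y$, exactly as in \meqref{b-0}. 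This reproduces the analogue of \meqref{ux-last}: the integral equals $(-1)^{k+1}\int_{\hat K}(u-v_I)_{x^{k+1}y}\bigl(s_{k-1}\psi_{x^{k-1}y}(0,y)+s_k'\psi_{x^k y}(0,y)\bigr)\,d\b x$.

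For the final integration by parts I would again split the two terms and handle them asymmetrically, now integrating by parts in $y$: the $s_k'$ term integrates by parts in $y$ with a vanishing boundary term by \meqref{s-j-b}, while the $s_{k-1}$ term integrates by parts in $y$ on the horizontal edges $y=\pm1$, where the boundary line integrals cancel across interior horizontal edges or vanish by the boundary condition on $\psi$. All $(k+1)$st and $(k+2)$nd partials of the polynomial $v_I$ are zero, so only $u$ survives in the final bound. A scaling from $\hat K$ back to the physical rectangle $K$ of size $\lesssim h$ produces the factor $h^{k+2}$, summation over $K\in{\cal T}_h$ plus the Cauchy--Schwarz inequality gives $Ch^{k+2}\|u\|_{H^{k+3}}\|\psi\|_{H^1}$, and the requirement $k>1$ appears for the identical reason as in Lemma~\mref{l-sup1}: the surviving factor $\psi_{x^{k-1}y}(0,y)$ must be controlled by $|\psi|_{H^1}$ rather than an $L^2$ norm of a derivative that does not exist when $k=1$.

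There is essentially no new obstacle here — the proof is genuinely symmetric to Lemma~\mref{l-sup1}, which is why the paper states it with \qed. The only point that requires a moment's care (and the one I would write out explicitly rather than wave at) is bookkeeping: verifying that the rotated interpolation conditions defining $v_I$ are precisely the ones needed at each integration-by-parts step, i.e., that the edge-moment conditions on $(u-v_I)$ are against $P_{k-1}(y)$ on the vertical edges and $P_{k-2}(x)$ on the horizontal edges, and that the interior conditions are against $Q_{k-2,k-1}$ — so that the degrees line up with the $P_{k-1}$ test polynomials in $y$ that arise from differentiating the $P_k$ coefficients $\psi_{x^j y}(0,y)$. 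Once that dictionary is fixed, every cancellation in the proof of Lemma~\mref{l-sup1} transfers verbatim.
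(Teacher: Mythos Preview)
Your approach is exactly the paper's: the lemma is obtained from Lemma~\mref{l-sup1} by the $x\leftrightarrow y$ symmetry, and the paper states it with no further detail beyond that remark. One small slip in your description of the final integration by parts: after the swap the auxiliary polynomials are $s_{k-1}(x)$ and $s_k'(x)$, so the $s_k'$ term must be integrated by parts in $x$ (not $y$) in order to invoke \meqref{s-j-b}, while the $s_{k-1}$ term is integrated by parts in $y$ with boundary contributions on the horizontal edges that cancel across elements or vanish by the boundary condition on $\psi$; with that correction your bookkeeping is right.
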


For the same reasons in Corollary \mref{c-sup1},  we get the following
   corollary from Lemma \mref{l-sup2}.

\begin{corollary} (one-order superconvergence) \lab{c-sup2} 
      For any $Q_{k,k+1}$ function $\psi\in V_{h,2}$, defined in \meqref{V2},
    for any $u\in H^{k+2}(\Omega)$, and for all $k\ge 1$,
     \e{\lab{sup-yy-1}
      |\int_\Omega (u-u_I)_y \psi_y d\b x|
	\le  Ch^{k+1}\|u\|_{H^{k+2}} \|\psi\|_{H^1}. }
    \qed
\end{corollary}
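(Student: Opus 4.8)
The plan is to run the proof of Lemma \ref{l-sup2} unchanged and simply stop the chain of integrations by parts one step earlier, exactly in the way Corollary \ref{c-sup1} is extracted from Lemma \ref{l-sup1}: the point of stopping early is that only first-order derivatives of $\psi$ are ever exposed, so the estimate survives the case $k=1$, at the price of one less power of $h$ and one less derivative demanded of $u$.

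First I would pass to the reference cell $\hat K=[-1,1]^2$ and, since $\psi\in Q_{k,k+1}$ has $x$-degree $\le k$, write the exact Taylor expansion of $\psi_y$ in powers of $x$ — the $x\leftrightarrow y$ mirror of \eqref{taylor}. The terms of $x$-degree at most $k-2$ integrate against $(u-u_I)_y$ to zero by the interpolation conditions defining the (rotated) interpolant of a $V_{h,2}$ function, i.e. the analogues of \eqref{int-n}, \eqref{int-k-1}, \eqref{int-k-2} and \eqref{int-1-2} with $x$ and $y$ exchanged. For the two top-degree terms I would bring in the auxiliary one-variable polynomials $s_{k-1}(x)$ and $s_k(x)$ of \eqref{s-k}--\eqref{s-yk} and integrate by parts in $x$. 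At the first step the boundary contribution lives on the two vertical edges of $\hat K$; it is removed by an auxiliary integration by parts in $y$ along each edge, in which the corner terms vanish by the vertex conditions \eqref{int-n} and the surviving edge integrals vanish by the (rotated) edge condition \eqref{int-k-1} — the $\psi$-factor there is a polynomial of $x$-degree $\le k-1$. At every later step the boundary term carries a factor $s_{k-1}^{(j)}(\pm1)$ or $s_k^{(j)}(\pm1)$ with $j\le k$ and hence vanishes by \eqref{s-j-b}; once the cells of ${\cal T}_h$ are assembled, the interior-edge contributions cancel pairwise across shared vertical edges and the outermost ones vanish because $\psi\in H^1_0(\Omega)$. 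This brings $\int_{\hat K}(u-u_I)_y\psi_y\,d\b x$ to the mirror of \eqref{ux-last}: a single integral over $\hat K$ of $(u-u_I)_{yx^{k+1}}$ against $s_{k-1}(x)\psi_{yx^{k-1}}(0,y)+s_k'(x)\psi_{yx^k}(0,y)$.

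Here is where I stop, and this is the only real difference from Lemma \ref{l-sup2}. Because the interpolant restricted to $K$ lies in $Q_{k,k+1}$ and so has $x$-degree $\le k$, one has $(u-u_I)_{yx^{k+1}}=u_{yx^{k+1}}$, so no further derivative of the interpolant has to be produced and no further integration by parts is warranted; doing the extra step (as in Lemma \ref{l-sup2}) would gain one more power of $h$ but would turn $\psi_{yx^{k-1}}$ into $\psi_{x^{k-1}}$, whose $L^2$-norm at $k=1$ is $\|\psi\|_{L^2}$ and is \emph{not} controlled by $|\psi|_{H^1}$. Both surviving $\psi$-factors carry at least one derivative, so on the fixed cell $\hat K$ their traces and $L^2$-norms are bounded by $|\psi|_{H^1(\hat K)}$ via equivalence of norms on the finite-dimensional space $Q_{k,k+1}$ modulo constants. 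I would then bound the reference-cell integral by $C|u|_{H^{k+2}(\hat K)}\,|\psi|_{H^1(\hat K)}$, scale back to a generic cell $K$ to recover the factor $h^{k+1}$ (the $H^1$-seminorm of $\psi$ being scale-invariant), sum over $K\in{\cal T}_h$, and finish with the Cauchy--Schwarz inequality, exactly as at the end of the proof of Lemma \ref{l-sup1}.

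The step I expect to be the main obstacle is the bookkeeping of the boundary terms through the successive integrations by parts: one must verify that the first-step edge term is genuinely annihilated by the vertex and edge interpolation conditions — this is precisely where the one-degree-lower space $Q_{k,k+1}$ in $x$ is used, since the differentiation that accompanies the auxiliary integration by parts drops the relevant $\psi$-trace into the degree range covered by those conditions — that every later edge term carries a factor killed by \eqref{s-j-b}, and that the interior-edge contributions cancel across neighbouring cells because the low-$x$-degree polynomial traces of $\psi$ agree there. Everything else — the Taylor expansion, the norm equivalence on $\hat K$, and the scaling back to $K$ — is routine.
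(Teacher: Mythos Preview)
Your proposal is correct and follows exactly the route the paper intends: the paper's own proof is the single line ``For the same reasons in Corollary~\ref{c-sup1}, we get the following corollary from Lemma~\ref{l-sup2}'', and what you have written is precisely the unpacking of that sentence --- run the $x\leftrightarrow y$ mirror of Lemma~\ref{l-sup1} and halt at the analogue of \eqref{ux-last} so that only $|\psi|_{H^1}$ appears and the case $k=1$ survives. One small remark: the inter-element edge cancellation you mention is not actually needed here, since you stop before the final integration by parts where it would be invoked; all intermediate boundary terms already vanish pointwise by \eqref{s-j-b} and the first one by the vertex/edge interpolation conditions.
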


Though the interpolation order is $(k+2)$ in above two lemmas,
   only the $(k+1)$ order in two corollaries can be achieved in
   computation due to the coupling of terms in mixed formulation.
We prove the approximation properties in the lower polynomial direction
   next.
Now, even for $k=1$, we have a two-order superconvergence.

\begin{lemma} (two-order superconvergence)   \lab{l-sup3} 
   For any $Q_{k+1,k}$ function $\psi\in V_{h,1}$, defined in \meqref{V1},
	  for any $u\in H^{k+3}(\Omega)$, and for all $k\ge 1$,
     \e{\lab{sup-xx-y}
      |\int_\Omega (u-u_I)_y \psi_y d\b x|
	\le  Ch^{k+2} \|u\|_{H^{k+3} } \|\psi\|_{H^1}. }
\end{lemma}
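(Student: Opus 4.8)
The plan is to run the integration-by-parts technique of Lemma~\mref{l-sup1}, but now differentiating $\psi$ in its \emph{lower}-degree direction $y$ while Taylor-expanding in its higher-degree direction $x$. As usual I would pass to the reference square $\hat K=[-1,1]^2$, prove the local bound $|\int_{\hat K}(u-u_I)_y\psi_y\,d\b x|\le C|u|_{H^{k+3}(\hat K)}|\psi|_{H^1(\hat K)}$, and then scale and sum over $K\in{\cal T}_h$. Since $\psi_y\in Q_{k+1,k-1}$, the starting point is the exact Taylor expansion in $x$ (analogous to \meqref{taylor})
\[
 \psi_y(x,y)=\sum_{i=0}^{k+1}\frac{x^i}{i!}\,h_i(y),\qquad
 h_i(y):=\partial_x^i\partial_y\psi(0,y)\in P_{k-1}(y).
\]
The key observation is that every coefficient $h_i$ carries a $y$-derivative of $\psi$, hence vanishes whenever $\psi$ is constant; this is what will make the argument work down to $k=1$.

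First I would dispose of the low-order terms $i\le k-1$. For such $i$, one integration by parts in $y$ splits $\int_{\hat K}(u-u_I)_y\,x^ih_i(y)\,d\b x$ into a horizontal-edge term $\int_{y=\pm1}(u-u_I)\,x^ih_i(\pm1)\,dx$, which vanishes by \meqref{int-k-1} because $x^i\in P_{k-1}(x)$, and a volume term $-\int_{\hat K}(u-u_I)\,x^ih_i'(y)\,d\b x$, which vanishes by \meqref{int-1-2} because $x^ih_i'(y)\in Q_{k-1,k-2}$. Hence only $i=k$ and $i=k+1$ survive. For these two I would use \meqref{s-yk} to replace $x^k/k!$ by $s_k^{(k+2)}(x)$ and $x^{k+1}/(k+1)!$ by $s_{k+1}^{(k+3)}(x)$; the polynomial remainders (of degree $\le k-2$ and $\le k-1$, respectively) are again annihilated by \meqref{int-k-1} and \meqref{int-1-2} after one integration by parts in $y$, exactly as above.

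The core of the argument is then $k+2$ successive integrations by parts in $x$ applied to $\int_{\hat K}(u-u_I)_y\,s_k^{(k+2)}(x)h_k(y)\,d\b x$ and to its $s_{k+1}$ analogue, transferring all the $x$-derivatives onto $(u-u_I)_y$. I would stop after exactly $k+2$ steps — precisely when $\partial_x^{k+2}u_I=0$ (as $u_I$ has degree $k+1$ in $x$) — so that the two terms become $\pm\int_{\hat K}u_{x^{k+2}y}\,s_k(x)h_k(y)\,d\b x$ and $\pm\int_{\hat K}u_{x^{k+2}y}\,s_{k+1}'(x)h_{k+1}(y)\,d\b x$, which involve only $H^{k+3}$-regularity of $u$. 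Of the vertical-edge boundary terms produced along the way, all but the first vanish automatically by the endpoint conditions \meqref{s-j-b} on the $s$-functions; the first one, still carrying only a single ($y$-)derivative and of the form $\int_{x=\pm1}(u-u_I)_y\,s_k^{(k+1)}(\pm1)\,h_k(y)\,dy$ (and its $s_{k+1}$ version), is killed by one further integration by parts in $y$ along that edge — the pointwise contributions sit at the corners and vanish by \meqref{int-n}, while the remaining integral pairs $h_k'(y)\in P_{k-2}(y)$ with $(u-u_I)$ on a vertical edge and vanishes by \meqref{int-k-2}. This manoeuvre is the vertical-edge counterpart of the cancellation \meqref{b-0} in Lemma~\mref{l-sup1}, and I expect it — together with the need to stop the $x$-integration by parts at exactly $k+2$ steps, since one extra step would raise the regularity requirement on $u$ to $H^{k+4}$ in the $i=k+1$ term — to be the main obstacle.

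It remains to estimate and assemble. On $\hat K$, boundedness of $s_k$ and $s_{k+1}'$ together with the finite-dimensional norm-equivalence bound $\|h_i\|_{L^2}\le C|\psi|_{H^1(\hat K)}$ for $i=k,k+1$ — valid because $h_k,h_{k+1}$ vanish on constants, which is also why the hypothesis $k>1$ of Lemma~\mref{l-sup1} is no longer needed — yields $|\int_{\hat K}(u-u_I)_y\psi_y\,d\b x|\le C|u|_{H^{k+3}(\hat K)}|\psi|_{H^1(\hat K)}$. Summing over $K\in{\cal T}_h$ with the standard affine scaling (each of the $k+3$ derivatives of $u$ contributing a power of $h$, offset by the single derivative on $\psi$) produces the factor $h^{k+2}$, and a Cauchy--Schwarz step over the elements gives \meqref{sup-xx-y}.
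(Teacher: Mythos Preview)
Your proposal is correct and follows essentially the same route as the paper's proof: Taylor-expand $\psi_y$ in $x$, eliminate the terms of $x$-degree $\le k-1$ via one integration by parts in $y$ together with \meqref{int-k-1}--\meqref{int-1-2}, rewrite the two top terms using $s_k^{(k+2)}$ and $s_{k+1}^{(k+3)}$, and then integrate by parts $k+2$ times in $x$, handling the first vertical-edge boundary contribution by an auxiliary integration by parts in $y$ and the remaining ones by \meqref{s-j-b}. Your treatment of that first boundary term (invoking both \meqref{int-n} and \meqref{int-k-2}) and your explanation of why the argument goes through for $k=1$ are in fact more explicit than the paper's, which cites only \meqref{int-n} at that step.
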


 \begin{proof}    
  Again, we first consider the estimation on the reference element
   $\hat K=[-1,1]^2$.
  Since the polynomial degree in $y$ is too low, we do
    Taylor expansion in $x$ direction, different from the last lemma.
  \a{ \psi_y (x, y) =  
    \psi_y(0,y)+x\psi_{xy}(0,y)+\cdots +
	\frac {x^k}{k!}\psi_{x^ky}(0,y)+
	\frac {x^{k+1}}{(k+1)!}\psi_{x^{k+1}y}(0,y).}
  Again, similar to \meqref{taylor}, the integral of $(u-u_I)_y$ against
     $x^j$ terms are zero if $j\le k-1$,
 \a{ &\quad \int_{\hat K} (u-u_I)_y x^j \psi_{x^jy}(0,y) dxdy 
	\\ & =  \int_{-1}^1 \left[(u-u_I)x^j \psi_{x^jy}(0,y)
	  \right]_{y=-1}^{y=1} dx
	  - \int_{\hat K} (u-u_I) x^j \psi_{x^jy^2}(0,y) dxdy 
	=0, }
  noting that $x^j \psi_{x^jy^2}(0,y)\in Q_{k-1,k-2}$.
  Using the polynomial function $s_k(x)$ defined in \meqref{s-k}
   we have, cf. \meqref{ux-1}, 
 \a{ &\int_{\hat K} (u-u_I)_y \psi_y dx dy
    \\=&\int_{\hat K} (u-u_I)_y  ( s_k^{(k+2)}(x) \psi_{x^ky} (0,y)
          + s_{k+1}^{(k+3)}(x) \psi_{x^{k+1}y}(0,y)) dxdy\\
    =& \int_{-1}^1 \left[ (u-u_I)_{y} (s_k^{(k+1)}(x) \psi_{x^ky}(0,y)
          + s_{k+1}^{(k+2)}(x) \psi_{x^{k+1}y}(0,y))\right]_{x=-1}^{x=1}dy\\
	&  - \int_{\hat K} (u-u_I)_{xy} (s_k^{(k+1)}(x) \psi_{x^ky} (0,y)
          + s_{k+1}^{(k+2)}(x) \psi_{x^{k+1}y}(0,y)) dxdy.
  } Here, for the first time integration by parts,
     the boundary integral disappeared by \meqref{int-n},
    $(u-u_I)(\pm 1,\pm 1)=0$.
 In the next $(k+1)$ times of integration by parts, the 
    boundary integrals on $x=\pm1$ would be zero, directly by
	the boundary condition \meqref{s-j-b} of $s_k(x)$.
 \a{ \int_{\hat K} (u-u_I)_y \psi_y dx dy
    &= (-1)^{k+2} \int_{\hat K} (u-u_I)_{x^{k+2}y} 
	  (s_k  \psi_{x^ky} (0,y)
          + s_{k+1}' \psi_{x^{k+1}y}(0,y)) dxdy.
  } 
Thus,
  \a{ \left|  \int_{\hat K} (u-u_I)_y \psi_y dx dy \right|
    & \le   C \|u_{x^{k+2}y}\|_{L^2(\hat K)}  \|\psi_y \|_{L^2(\hat K)} \\
    & \le  C |u|_{H^{k+3}(\hat K)}  |\psi \|_{H^1(\hat K)}. }
 The rest proof repeats that of Lemma \mref{l-sup1}.
  \end{proof}

As for above lemmas and corollaries, we can get the following
   corollary from Lemma \mref{l-sup3}

\begin{corollary} (two-order superconvergence) \lab{c-sup3} 
  For any $Q_{k+1,k}$ function $\psi\in V_{h,1}$, defined in \meqref{V1},
	 for any $u\in H^{k+2}(\Omega)$, and for all $k\ge 1$,
     \e{\lab{sup-xx-y-1}
      |\int_\Omega (u-u_I)_y \psi_y d\b x|
	\le  Ch^{k+1} \|u\|_{H^{k+2} } \|\psi\|_{H^1}. }
    \qed
\end{corollary}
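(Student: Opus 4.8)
The plan is to reproduce the proof of Lemma~\mref{l-sup3} almost verbatim, performing one integration by parts fewer; this is exactly the relationship Corollary~\mref{c-sup1} bears to Lemma~\mref{l-sup1}, and its only purpose is to weaken the hypothesis from $u\in H^{k+3}$ to $u\in H^{k+2}$ at the price of one power of $h$, so that the estimate can be fed into the superconvergence theorem under realistic regularity.

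First I would localize to the reference square $\hat K=[-1,1]^2$ and expand $\psi_y$ exactly in the $x$-variable, $\psi_y(x,y)=\sum_{j=0}^{k+1}\frac{x^j}{j!}\psi_{x^jy}(0,y)$ (finite since $\psi\in Q_{k+1,k}$). As in \meqref{k-2-terms}, the terms with $j\le k-1$ contribute nothing: one integration by parts in $y$ leaves a boundary integral killed by \meqref{int-k-1} and a volume integral against $x^j\psi_{x^jy^2}(0,y)\in Q_{k-1,k-2}$ killed by \meqref{int-1-2}. For the two surviving top-order terms I would replace $x^k/k!$ by $s_k^{(k+2)}(x)$ and $x^{k+1}/(k+1)!$ by $s_{k+1}^{(k+3)}(x)$ (the discarded lower-order polynomials in $x$ again drop out through \meqref{int-k-1}--\meqref{int-1-2}), and then integrate by parts in $x$. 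The first integration by parts produces a boundary term on $x=\pm1$ which is removed exactly as in Lemma~\mref{l-sup3}, using $(u-u_I)(\pm1,\pm1)=0$ from \meqref{int-n} together with \meqref{int-k-2} (or, after summing over the grid, interelement continuity of $u-u_I$ and its tangential derivative plus the boundary condition on $\psi$); every subsequent integration by parts has a boundary term vanishing directly from $s_k^{(j)}(\pm1)=s_{k+1}^{(j)}(\pm1)=0$, $0\le j\le k$, in \meqref{s-j-b}. Carrying out $k+1$ integrations by parts in all --- one fewer than the $k+2$ of Lemma~\mref{l-sup3} --- and noting that the last step invokes $s_k'(\pm1)=s_{k+1}''(\pm1)=0$ so that no residual boundary term is left, I arrive at
\[ \int_{\hat K}(u-u_I)_y\psi_y\,dxdy=(-1)^{k+1}\int_{\hat K}(u-u_I)_{x^{k+1}y}\bigl(s_k'(x)\,\psi_{x^ky}(0,y)+s_{k+1}''(x)\,\psi_{x^{k+1}y}(0,y)\bigr)\,dxdy. \]

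Finally, Cauchy--Schwarz and scaling finish the argument. Because each term on the right still carries one $y$-derivative of $\psi$, equivalence of norms on the finite-dimensional space of $Q_{k+1,k}$-polynomials bounds the second factor by $C\|\psi_y\|_{L^2(\hat K)}\le C|\psi|_{H^1(\hat K)}$ --- only the $H^1$-seminorm of $\psi$ appears, so the estimate holds down to $k=1$. The first factor is bounded by $C|u|_{H^{k+2}(\hat K)}$ by a standard interpolation estimate on $\hat K$: the map $u\mapsto(u-u_I)_{x^{k+1}y}$ is bounded on $H^{k+2}(\hat K)$ and annihilates $Q_{k+1,k}$ as well as the single extra monomial $y^{k+1}$ of total degree $k+1$ (whose interpolant is a function of $y$ alone, hence killed by $\partial_x^{k+1}$), so Bramble--Hilbert applies at level $k+2$. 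Mapping back to $K\in{\cal T}_h$ turns $|u|_{H^{k+2}(\hat K)}|\psi|_{H^1(\hat K)}$ into $Ch_K^{k+1}|u|_{H^{k+2}(K)}|\psi|_{H^1(K)}$, and summing over $K$ with one more Cauchy--Schwarz yields \meqref{sup-xx-y-1}. The one step that genuinely requires care --- the main obstacle --- is the disposal of the very first boundary term, which, unlike every later one, is not annihilated by the vanishing of the $s$-polynomials at $\pm1$ and must instead be absorbed through the vertex and edge interpolation conditions and interelement cancellation, precisely as in Lemma~\mref{l-sup3}; all the rest is the routine scaling already carried out there.
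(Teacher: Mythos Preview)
Your proposal is correct and follows exactly the route the paper intends: the paper gives no explicit proof (just ``\qed'' after remarking that the corollary follows from Lemma~\ref{l-sup3} in the same way Corollary~\ref{c-sup1} follows from Lemma~\ref{l-sup1}), and your argument---stop one integration by parts short of \eqref{ux-last}'s analogue in Lemma~\ref{l-sup3}---is precisely that. Your Bramble--Hilbert justification that $u\mapsto (u-u_I)_{x^{k+1}y}$ annihilates $P_{k+1}$ (in particular the monomial $y^{k+1}$, whose interpolant depends on $y$ alone) is a genuine detail the paper glosses over, since here, unlike in Lemma~\ref{l-sup3}, $(u_I)_{x^{k+1}y}$ need not vanish identically; an equivalent but slightly quicker observation is that $(u_I)_{x^{k+1}y}$ depends only on $y$, while $\int_{-1}^1 s_k'(x)\,dx=\int_{-1}^1 s_{k+1}''(x)\,dx=0$ by \eqref{s-j-b}, so the $u_I$ contribution drops out directly.
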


\begin{corollary} \lab{c-sup4} 
  For any $Q_{k,k+1}$ function $\psi\in V_{h,2}$, defined in \meqref{V2},
	and for any $u\in H^{k+3}(\Omega)$,  and for all $k\ge 1$,
     \an{\lab{sup-yy-x}
      |\int_\Omega (u-u_I)_y \psi_y d\b x|
	\le  Ch^{k+2} \|u\|_{H^{k+3} } \|\psi\|_{H^1}, \\
	\lab{sup-yy-x-1}
      |\int_\Omega (u-u_I)_y \psi_y d\b x|
	\le  Ch^{k+1} \|u\|_{H^{k+2} } \|\psi\|_{H^1}. }
    \qed
\end{corollary}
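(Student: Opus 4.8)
This is the reflection of Lemma \ref{l-sup3} and its Corollary \ref{c-sup3} under the interchange of the two coordinates: the space $V_{h,2}$ of \eqref{V2} is the image of $V_{h,1}$ under $x\leftrightarrow y$, and the interpolant $u_I$ meant here is the $V_{h,2}$-type one, built from the conditions \eqref{int-n}--\eqref{int-1-2} with $x$ and $y$ -- and hence the degrees $k-1$ and $k-2$ -- swapped. So my plan is to transcribe the proof of Lemma \ref{l-sup3} with $x$ and $y$ exchanged throughout, run its chain of integrations by parts to the end to get \eqref{sup-yy-x}, and then, exactly as Corollary \ref{c-sup3} was read off from Lemma \ref{l-sup3}, stop the chain one step earlier -- giving up one power of $h$ and one Sobolev order of $u$ -- to get \eqref{sup-yy-x-1}.

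Carrying this out, I would first rescale to $\hat K=[-1,1]^2$. Since $\psi\in V_{h,2}$ carries its full degree $k+1$ in the $y$-variable, I would Taylor-expand $\psi_x$ about $y=0$ up to order $k+1$, each coefficient $\psi_{xy^j}(x,0)$ being a polynomial in $x$ alone. The terms with $j\le k-1$ integrate away after one integration by parts in $x$: the interior contribution pairs $(u-u_I)$ with a $Q_{k-2,k-1}$ polynomial and vanishes by the rotated \eqref{int-1-2}, while the vertical-edge contribution pairs the trace of $(u-u_I)$ with a polynomial of degree $\le k-1$ in $y$ and vanishes by the rotated \eqref{int-k-1}. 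For the two surviving coefficients I would represent $y^k$ and $y^{k+1}$ by $s_k^{(k+2)}$ and $s_{k+1}^{(k+3)}$ modulo lower-degree polynomials, using \eqref{s-k}--\eqref{s-yk} (the lower-degree remainders being annihilated just like the low Taylor terms), and integrate by parts in $y$ a total of $k+2$ times; the boundary terms on $y=\pm1$ die after an auxiliary integration by parts in $x$ for the first one -- by the vertex condition \eqref{int-n} and the rotated \eqref{int-k-2} -- and directly by \eqref{s-j-b} thereafter. The chain ends at $\int_{\hat K}u_{xy^{k+2}}\bigl(s_k\,\psi_{xy^k}(x,0)+s_{k+1}'\,\psi_{xy^{k+1}}(x,0)\bigr)\,d\b x$, the $u_I$-part having dropped out since $u_I$ is of degree only $k+1$ in $y$; this is bounded by $C\,|u|_{H^{k+3}(\hat K)}\,|\psi|_{H^1(\hat K)}$. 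Scaling back to a general cell $K$ produces the factor $h^{k+2}$, and summing over $K\in{\cal T}_h$ with Cauchy--Schwarz gives \eqref{sup-yy-x}; stopping the chain one integration by parts earlier gives \eqref{sup-yy-x-1} with $u\in H^{k+2}$.

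I do not expect a genuine obstacle; the only point needing care is the bookkeeping of the rotated interpolation moments -- for $V_{h,2}$ the low-degree variable is $x$, so the interior moments that kill the low Taylor terms are against $Q_{k-2,k-1}$, the vertical-edge moments against $p_{k-1}(y)$, and the horizontal-edge moments against $p_{k-2}(x)$, i.e.\ precisely the $x\leftrightarrow y$ images of \eqref{int-k-1}--\eqref{int-1-2}. As in Lemma \ref{l-sup3}, and unlike Lemma \ref{l-sup1}, the Taylor expansion is taken in the direction in which $\psi$ retains its full polynomial degree, so the factor of $\psi$ left in the final integral still carries a genuine $x$-derivative -- its remaining derivatives being in $y$ and harmless by an inverse inequality on polynomials -- hence is controlled by $|\psi|_{H^1}$ with no loss of a power of $h$ under scaling. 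Consequently the restriction $k>1$ never appears, and both estimates hold for every $k\ge1$.
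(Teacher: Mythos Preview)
Your argument is correct and follows the paper's own approach: the paper derives this corollary as the $x\leftrightarrow y$ reflection of Lemma~\ref{l-sup3} and Corollary~\ref{c-sup3}, giving no separate proof beyond the remark preceding the statement. Note that the printed integrand $(u-u_I)_y\psi_y$ appears to be a typo for $(u-u_I)_x\psi_x$ --- the latter is the missing fourth term of $a(\cdot,\cdot)$ in Lemma~\ref{sup-ab} and the genuine reflection of \eqref{sup-xx-y} --- and you have, correctly and consistently with the paper's intent, proved this corrected version.
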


Now we study the superconvergence in the both bilinear forms.

\begin{lemma} \lab{sup-ab} 
   For any $(\b v_h, q_h)\in \b V_h\times P_h$, defined in 
     \meqref{e-space} and \meqref{e-pressure}, and for any
    $\b u\in H^3(\Omega)\cap H^1_0(\Omega)$, 
     \an{\lab{sup-a}  
	|a(\b u-\b I_h \b u, \b v_h) | &\le C h^{k+2}
	   \|\b u\|_{H^{k+3}(\Omega)^2}
	   \|\b v_h\|_{H^1(\Omega)^2}, \quad k>1, \\
   \lab{sup-a-1}  
	|a(\b u-\b I_h \b u, \b v_h) | &\le C h^{k+1}
	   \|\b u\|_{H^{k+2}(\Omega)^2}
	   \|\b v_h\|_{H^1(\Omega)^2},  \quad k\ge 1,\\
      \lab{sup-b} 
	|b(\b u-\b I_h \b u, q_h) | &\le C h^{k+1} \|\b u\|_{H^{k+2}(\Omega)^2}
	   \|q_h\|_{L^2(\Omega)},  \quad k\ge 1,
	   }
    where $\b I_h \b u$ is the interpolation of $\b u$ defined
     by \meqref{interpolation}.
\end{lemma}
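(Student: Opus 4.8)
The plan is to split both bilinear forms into scalar integrals and then invoke, term by term, the per-direction estimates of Lemmas~\ref{l-sup1}--\ref{l-sup3} and Corollaries~\ref{c-sup1}--\ref{c-sup4}, together with one elementary interpolation bound for the pressure term. Write $\b u=(u,v)$ and $\b I_h\b u=(u_I,v_I)$ as in \eqref{interpolation}, and $\b v_h=(\psi_1,\psi_2)$ with $\psi_1\in V_{h,1}$, $\psi_2\in V_{h,2}$. Then
\e{\ad{
 a(\b u-\b I_h\b u,\b v_h)
 &=\int_\Omega (u-u_I)_x(\psi_1)_x\,d\b x+\int_\Omega (u-u_I)_y(\psi_1)_y\,d\b x\\
 &\quad+\int_\Omega (v-v_I)_x(\psi_2)_x\,d\b x+\int_\Omega (v-v_I)_y(\psi_2)_y\,d\b x.}}
The first and last integrals are the pairings in the high-degree direction of each velocity component; by Lemma~\ref{l-sup1} and its $x\leftrightarrow y$ mirror image Lemma~\ref{l-sup2} each is bounded by $Ch^{k+2}\|\b u\|_{H^{k+3}(\Omega)^2}\|\b v_h\|_{H^1(\Omega)^2}$ when $k>1$, and by Corollaries~\ref{c-sup1} and~\ref{c-sup2} by $Ch^{k+1}\|\b u\|_{H^{k+2}(\Omega)^2}\|\b v_h\|_{H^1(\Omega)^2}$ when $k\ge1$. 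The second and third integrals are the pairings in the low-degree direction; by Lemma~\ref{l-sup3} and Corollary~\ref{c-sup3} for the first component, and by their $x\leftrightarrow y$ counterpart Corollary~\ref{c-sup4} for the second component, each of these already obeys the $h^{k+2}$ bound for all $k\ge1$, hence a fortiori the $h^{k+1}$ bound. Summing the four contributions and using $\|\psi_1\|_{H^1(\Omega)},\|\psi_2\|_{H^1(\Omega)}\le\|\b v_h\|_{H^1(\Omega)^2}$ and $\|u\|_{H^s(\Omega)},\|v\|_{H^s(\Omega)}\le\|\b u\|_{H^s(\Omega)^2}$ gives \eqref{sup-a} for $k>1$ and \eqref{sup-a-1} for $k\ge1$. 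When $k=1$, where Lemmas~\ref{l-sup1} and~\ref{l-sup2} are not available, only the corollaries are used and only \eqref{sup-a-1} is asserted, so there is no gap.

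For the pressure term no integration by parts is needed, because the divergence of a velocity field couples only the high-degree partial derivative with each component:
\e{b(\b u-\b I_h\b u,q_h)=-\int_\Omega\bigl((u-u_I)_x+(v-v_I)_y\bigr)q_h\,d\b x.}
Since $u_I$ reproduces $Q_{k+1,k}$, which carries full degree $k+1$ in $x$, and $v_I$ reproduces $Q_{k,k+1}$, which carries full degree $k+1$ in $y$, the standard anisotropic interpolation estimate gives $\|(u-u_I)_x\|_{L^2(\Omega)}\le Ch^{k+1}\|u\|_{H^{k+2}(\Omega)}$ and $\|(v-v_I)_y\|_{L^2(\Omega)}\le Ch^{k+1}\|v\|_{H^{k+2}(\Omega)}$; here no power of $h$ is lost, in contrast to the full $H^1$ interpolation error, which is only of order $h^k$ for these spaces. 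A single Cauchy--Schwarz inequality then yields \eqref{sup-b}, using nothing more than $q_h\in L^2(\Omega)$.

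Most of the work is already done in the preceding lemmas, so what remains is mostly bookkeeping: assigning each of the four terms of $a(\cdot,\cdot)$ to the correct lemma or corollary and keeping the sharp power of $h$ and the sharp Sobolev index straight, in particular recording that the two low-degree pairings gain the extra order for every $k\ge1$ whereas the two high-degree pairings need $k>1$. The one genuinely new point, minor though it is, is \eqref{sup-b}: one should resist integrating by parts and instead notice that $\d$ never sees the low-degree direction of a velocity component, so that the plain anisotropic $L^2$ bound on the derivative of the interpolation error already delivers order $k+1$. That last step, trivial as it looks, is the place where a careless reader is most likely to slip.
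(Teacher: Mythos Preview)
Your treatment of \eqref{sup-a} and \eqref{sup-a-1} matches the paper exactly: split $a(\cdot,\cdot)$ into its four scalar pieces and feed each to the appropriate lemma or corollary.

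For \eqref{sup-b}, however, you take a genuinely different and shorter route. The paper writes $q_h=\operatorname{div}\b w_h=\phi_x+\psi_y$ with $\b w_h=(\phi,\psi)\in\b V_h$, expands the integrand into four products, and is forced to analyse two ``new'' mixed terms $\int_K(u-u_I)_x\psi_y$ and $\int_K(v-v_I)_y\phi_x$ by a Taylor expansion in the $y$-direction (cf.\ \eqref{taylor3}) and repeated integration by parts, finally converting the resulting bound in $\|\b w_h\|_{H^1}$ back to $\|q_h\|_{L^2}$ via the discrete inf--sup condition. You instead apply a single Cauchy--Schwarz and invoke the anisotropic interpolation bounds $\|(u-u_I)_x\|_{L^2}\le Ch^{k+1}\|u\|_{H^{k+2}}$ and $\|(v-v_I)_y\|_{L^2}\le Ch^{k+1}\|v\|_{H^{k+2}}$. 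This is correct, and indeed yields \eqref{sup-b} for arbitrary $q_h\in L^2(\Omega)$, not just $q_h\in P_h$.

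One point of care: the sentence ``since $u_I$ reproduces $Q_{k+1,k}$, which carries full degree $k+1$ in $x$'' is not by itself the reason the anisotropic estimate holds. What you actually need is that the interpolation operator defined by \eqref{int-n}--\eqref{int-1-2} is the tensor product $I_x^{k+1}\otimes I_y^{k}$ of one-dimensional projectors (matching endpoint values plus interior moments); this ensures that the interpolant of a function of $y$ alone is again a function of $y$ alone, so in particular $(y^{k+1}-({y^{k+1}})_I)_x=0$, whence $(u-u_I)_x$ vanishes on all of $P_{k+1}$ and Bramble--Hilbert gives the $h^{k+1}$ bound. That verification is routine but should be stated; mere reproduction of $Q_{k+1,k}$ by an arbitrary projector would not suffice. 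With this caveat, your argument is cleaner than the paper's and avoids both the superconvergence machinery and the implicit appeal to the inf--sup condition.
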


 \begin{proof}  
  \meqref{sup-a} is a combination of 
   \meqref{sup-xx}, \meqref{sup-xx-y},  \meqref{sup-yy-x}, 
        and \meqref{sup-yy}.
   \meqref{sup-a-1} is a combination of 
   \meqref{sup-xx-1}, \meqref{sup-xx-y-1},  \meqref{sup-yy-x-1}, 
        and \meqref{sup-yy-1}.

  For
   \meqref{sup-b}, we will lose one order of convergence.
  Let  $q_h=\d\b w_h$ for some $\b w_h=(\phi,\psi)
	\in \b V_h$.
  We have, denoting $\b u=(u,v)$,
  \a{ b(\b u-\b I_h \b u, q_h) &=
	\sum_K \int_K ((u-u_I)_x+(v-v_I)_y)(\phi_x+\psi_y) d\b x. 
    }
 Here we have two old integrals,
     $\int_K (u-u_I)_x \phi_x d\b x$ and $\int_K (v-v_I)_y \psi_y d\b x$,
    and two new integrals,
   $\int_K (u-u_I)_x \psi_y d\b x$ and $\int_K (v-v_I)_y \phi_x d\b x$.
The approximation order can be one order higher for the two old
   integrals.
For the two new integrals, by symmetry, 
   we consider $\int_K (u-u_I)_x \psi_y d\b x$.
 We use the following
    Taylor expansion on the reference element $\hat K$ in the
   $y$ direction.
  We note that the Taylor expansion in $x$ direction would lead to
    a too high order polynomial  in $y$ direction 
      each term in \meqref{taylor3} below.
  \an{ \lab{taylor3}
    \psi_y(x,y)=\psi_y(x,0)+ y\psi_{y^2}(x,0)+\cdots
	+\frac{y^{k-1}}{(k-1)!} \psi_{y^k}(x,0) 
	+\frac{y^{k}}{k!} \psi_{y^{k+1}}(x,0). }
 Here all $\psi_{y^j}(x,0)$ are polynomials of degree $k$ in
	$x$.
  That is, a generic term $y^j \psi_{y^{j+1}}(x,0) \in Q_{k,j}$.
  This is the same as the generic term $y^j \psi_{xy^j}(x,0)$ in
    the early Taylor expansion \meqref{taylor}.
 Thus repeating the proof of  Lemma
    \mref{l-sup1}, we get
  \a{ \int_{\hat K} (u-u_I)_x \psi_y d\b x
    &=\int_{\hat K} (u-u_I)_x( s_{k-1}^{(k+1)}\psi_{y^k}(x,0)
	+s_k^{(k+2)} \psi_{y^{k+1}}(x,0)) dx dy  \\
   &= (-1)^{k+1} \int_{\hat K}  u_{xy^{k+1}}( s_{k-1}\psi_{y^k}(x,0)
	+s_k' \psi_{y^{k+1}}(x,0)) dx dy.
   }
  For the second integral, we can do an integration by parts to raise
   one more order.  But we are limited
    by the first integral above to get only
   \a{ \left|\int_{\hat K} (u-u_I)_x \psi_y d\b x\right|
     \le  \|u\|_{H^{k+2}(\hat K)} \| \psi\|_{H^1(\hat K)}. }
  Similarly, we have the same bound for 
	$\left|\int_{\hat K} (u-u_I)_y \psi_x d\b x\right|$.
 \meqref{sup-b} follows by the Schwartz inequality and the scaling
     of referencing mappings.
 \end{proof}

Finally, we estimate the approximation to $p$.

\begin{lemma}  \lab{l-supp} 
   For any  function $\b v_h\in \b V_{h}$, defined in \meqref{e-space},
	and for any $p\in H^{k+1}(\Omega)\cap L^2_0(\Omega)$,
      \e{\lab{sup-p}
      |\int_\Omega  \d \b v_h (p-p_I) d\b x|
	\le  Ch^{k+1} \|\b v_h\|_{H^1} \| p \|_{H^{k+1}}, }
    where $p_I$ is a special nodal interpolation of $p$ in $P_h$,
   defined in \meqref{pI} below.
\end{lemma}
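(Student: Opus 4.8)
The plan is to realize $p_I$ as the divergence of a nodal interpolant of a velocity potential of $p$: this makes $p_I$ automatically lie in $P_h$ and reduces \eqref{sup-p} to the velocity estimates already proved. First I would fix a right inverse of the divergence. Since $p\in H^{k+1}(\Omega)\cap L^2_0(\Omega)$, there is a $\b w=(w_1,w_2)\in H^{k+2}(\Omega)^2\cap H^1_0(\Omega)^2$ with $\d\b w=p$ in $\Omega$ and $\|\b w\|_{H^{k+2}(\Omega)^2}\le C\|p\|_{H^{k+1}(\Omega)}$ (a Bogovski\u\i-type right inverse, patched over a finite cover of the Lipschitz domain $\Omega$ by subdomains star-shaped with respect to a ball). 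Because $H^{k+2}\subset C^1$ in two dimensions for $k\ge1$, the interpolant $\b I_h\b w$ from \eqref{interpolation}, \eqref{int-n}--\eqref{int-1-2} is well defined, it belongs to $\b V_h$ (the homogeneous trace of $\b w$ forces the vertex values and edge moments of $\b I_h\b w$ on $\partial\Omega$ to vanish), and hence I would take
$$p_I:=\d(\b I_h\b w)\in P_h,$$
which serves as the interpolation \eqref{pI} referred to in the statement; consequently $p-p_I=\d(\b w-\b I_h\b w)$.

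Next, writing $\b v_h=(\phi,\psi)\in\b V_h$ with $\phi\in V_{h,1}$ and $\psi\in V_{h,2}$, I would expand
$$\int_\Omega \d\b v_h\,(p-p_I)\,d\b x=\int_\Omega(\phi_x+\psi_y)\big((w_1-(w_1)_I)_x+(w_2-(w_2)_I)_y\big)\,d\b x$$
into four integrals. The two diagonal integrals $\int_\Omega(w_1-(w_1)_I)_x\,\phi_x\,d\b x$ and $\int_\Omega(w_2-(w_2)_I)_y\,\psi_y\,d\b x$ are bounded by $Ch^{k+1}\|w_j\|_{H^{k+2}}\|\b v_h\|_{H^1}$ directly from Corollary \ref{c-sup1} and Corollary \ref{c-sup2}. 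The two cross integrals $\int_\Omega(w_1-(w_1)_I)_x\,\psi_y\,d\b x$ and $\int_\Omega(w_2-(w_2)_I)_y\,\phi_x\,d\b x$ are precisely the ``new integrals'' treated in the proof of Lemma \ref{sup-ab} for \eqref{sup-b}: the Taylor expansion \eqref{taylor3} in the low-degree direction, followed by the same repeated integration by parts as in Lemma \ref{l-sup1}, gives the same bound $Ch^{k+1}\|w_j\|_{H^{k+2}}\|\b v_h\|_{H^1}$. Summing the four contributions and using $\|\b w\|_{H^{k+2}}\le C\|p\|_{H^{k+1}}$ yields \eqref{sup-p}.

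I expect the only non-routine ingredient to be the first step: producing a right inverse of $\d$ that maps $H^{k+1}(\Omega)\cap L^2_0(\Omega)$ boundedly into $H^{k+2}(\Omega)^2\cap H^1_0(\Omega)^2$ on the polygonal domain $\Omega$; everything afterward is bookkeeping that re-uses Corollaries \ref{c-sup1}--\ref{c-sup2} and the cross-term computation already carried out inside the proof of Lemma \ref{sup-ab}, so no new integration-by-parts argument is needed. Note that only $H^{k+2}$ regularity of $\b w$ is required (the one-order corollaries, not the two-order lemmas, enter here), which matches exactly the regularity $p\in H^{k+1}$ assumed in the statement. A minor point to check is that $\d(\b I_h\b w)$ genuinely lands in $P_h$ rather than merely in the discontinuous $Q_k$ space: this is automatic from $P_h=\d\b V_h$ and $\b I_h\b w\in\b V_h$, which is precisely the reason for building $p_I$ through a velocity potential instead of interpolating $p$ directly.
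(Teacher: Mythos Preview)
Your proposal is correct and follows essentially the same route as the paper: define $p_I=\d(\b I_h\b w)$ for a velocity potential $\b w$ of $p$, expand $\int_\Omega\d\b v_h\,\d(\b w-\b I_h\b w)$ into the same four integrals, and bound them exactly as in the proof of \meqref{sup-b} in Lemma~\mref{sup-ab}. The only cosmetic difference is the source for the divergence right inverse: the paper cites Arnold--Scott--Vogelius \cite{Arnold-Scott-Vogelius} for the $H^2\to H^3\cap H^1_0$ case and then explicitly \emph{assumes} the lifting extends to $H^{k+1}\to H^{k+2}\cap H^1_0$, whereas you invoke a Bogovski\u\i-type construction; since $p$ is not compactly supported, the patched Bogovski\u\i\ argument needs some care on a polygon, and the paper's candid ``for simplicity, we assume'' is arguably the more honest formulation of the same hypothesis.
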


 \begin{proof}   We note that 
    $P_h$ are discontinuous $Q_k$ functions,
    $P_h=\d \b V_h$.
    We define an interpolation operator for $P_h$ via that $\b I_h$
     for $\b V_h$ defined in \meqref{interpolation}.
    For a $p\in H^2(\Omega)\cap L^2_0(\Omega)$, 
      Arnold, Scott and Vogelius shown in \cite{Arnold-Scott-Vogelius}
     that there is a $\b w\in  H^3(\Omega)^2\cap H^1_0(\Omega)^2$, 
    such that \a{ \d \b w = p, \quad \hbox{ and \ }
	   \|\b w\|_{H^3}\le C\|p\|_{H^2}. }
  For simplicity, we assume the above lifting exists up to order $k+1$.
    We define 
	\e{\lab{pI} p_I = \d \b w_I,}
      for $\b w_I=\b I_h \b w $ defined by \meqref{interpolation}.
    In order to use \meqref{sup-b}, we use notations:
  \a{ \b w=\p{u\\ v}, \ \b w_I =\p{ u_I \\ v_I }, \
        \b v_h =\p{\phi  \\ \psi }. }
  Repeating the proof in Lemma \mref{sup-ab}, we get 
   \a{  |\int_\Omega  \d \b v_h (p-p_I) d\b x|
	& = |\int_\Omega  \d \b v_h \d(\b w-\b w_I) d\b x| \\
       &= | \int_\Omega ((u-u_I)_x+ (v-v_I)_y) 
		(\phi_x+\psi_y )d\b x \\
       &\le C h^{k+1}\left|\p{ u\\ v}\right|_{H^{k+2}}
	   \left |\p{\phi\\\psi}\right |_{H^{1}}
	\\ & \le C h^{k+1} \left\|p\right\|_{H^{k+1}}
                 \left\|\b v_h\right\|_{H^1}. } 
\end{proof}

We derive the main theorem.

\begin{theorem}\lab{t-sup}  The finite element solution 
	$(\b u_h, p_h)$ of \meqref{e-finite} has the
   following superconvergence property, one order higher than the
   optimal order, 
  \an{\lab{truncation2}  
    \|\b u_h - \b I_h \b u \|_{H^1} +  \| p_h - p_I\|_{L^2} & \le C h^{k+1}
     (\|\b u\|_{H^{k+2} } + \|p\|_{H^{k+1}}), }
   where the interpolations $(\b I_h \b u, p_I)$ are
    defined in \meqref{interpolation} and \meqref{pI}.
  \end{theorem}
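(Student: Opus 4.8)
The plan is to combine the classical second Strang-type estimate for mixed methods with the superconvergence bounds proven in Lemmas \mref{sup-ab} and \mref{l-supp}. First I would write the error equations: since both $(\b u,p)$ solving \meqref{e-v} and $(\b u_h,p_h)$ solving \meqref{e-finite} are conforming, subtracting gives, for all $(\b v_h,q_h)\in \b V_h\times P_h$,
\a{ a(\b u-\b u_h,\b v_h) + b(\b v_h,p-p_h) &= 0, \\
      b(\b u-\b u_h,q_h) &= 0. }
Inserting the interpolants $\b I_h\b u$ and $p_I$, set $\b e_h=\b I_h\b u-\b u_h\in\b V_h$ and $\epsilon_h=p_I-p_h\in P_h$ (note $p_I=\d\b w_I\in P_h$ by \meqref{pI}). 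Then
\a{ a(\b e_h,\b v_h) + b(\b v_h,\epsilon_h) &= a(\b I_h\b u-\b u,\b v_h) + b(\b v_h,p_I-p), \\
      b(\b e_h,q_h) &= b(\b I_h\b u-\b u,q_h). }

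Next I would exploit the divergence-free structure, which is what makes this cleaner than a generic mixed method. Because $\b u_h$ is divergence-free pointwise \meqref{e-u1} and $q_h$ ranges over $P_h=\d\b V_h$, choosing $q_h=\epsilon_h$ in the second equation and using that $\b u$ is also divergence-free yields $b(\b e_h,\epsilon_h)=b(\b I_h\b u-\b u,\epsilon_h)$, which by Lemma \mref{sup-ab}, estimate \meqref{sup-b}, is bounded by $Ch^{k+1}\|\b u\|_{H^{k+2}}\|\epsilon_h\|_{L^2}$. For the velocity error I would take $\b v_h=\b e_h$ in the first equation; the term $b(\b e_h,\epsilon_h)$ reappears and is controlled as above, while $a(\b e_h,\b e_h)=|\b e_h|_{H^1}^2$ controls the left side via Poincaré. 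The right-hand side $a(\b I_h\b u-\b u,\b e_h)$ is bounded by \meqref{sup-a-1}, namely $Ch^{k+1}\|\b u\|_{H^{k+2}}\|\b e_h\|_{H^1}$, and $b(\b e_h,p_I-p)=b(\b e_h,\d(\b w_I-\b w))$ is bounded by Lemma \mref{l-supp}, \meqref{sup-p}, by $Ch^{k+1}\|\b e_h\|_{H^1}\|p\|_{H^{k+1}}$. Combining gives
\[ \|\b e_h\|_{H^1} \le C h^{k+1}\bigl(\|\b u\|_{H^{k+2}} + \|p\|_{H^{k+1}}\bigr). \]

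For the pressure I would use an inf-sup argument on $P_h$ against $\b V_h$: since $P_h=\d\b V_h$, for any $\epsilon_h\in P_h$ there is $\b v_h\in\b V_h$ with $\d\b v_h=\epsilon_h$ and $\|\b v_h\|_{H^1}\le C\|\epsilon_h\|_{L^2}$ (this is the Arnold–Scott–Vogelius-type stability already invoked in Lemma \mref{l-supp}). Then from the first error equation, $\|\epsilon_h\|_{L^2}^2 = b(\b v_h,\epsilon_h) = -a(\b e_h,\b v_h) + a(\b I_h\b u-\b u,\b v_h) + b(\b v_h,p_I-p)$, and each term is bounded by $Ch^{k+1}(\cdots)\|\b v_h\|_{H^1}\le Ch^{k+1}(\cdots)\|\epsilon_h\|_{L^2}$, using the velocity bound just established for the first term. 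Dividing through gives the pressure estimate, and adding the two completes \meqref{truncation2}. The main obstacle I anticipate is making the inf-sup / lifting step fully rigorous at general order $k+1$ — the excerpt itself flags this by writing ``for simplicity, we assume the above lifting exists up to order $k+1$'' — so the theorem's proof inherits that standing hypothesis; apart from that, all the real analytic work has been front-loaded into Lemmas \mref{sup-ab} and \mref{l-supp}, and the theorem is essentially their bookkeeping via the error equations.
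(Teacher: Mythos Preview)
Your argument is essentially correct and uses the same key inputs (Lemmas \mref{sup-ab} and \mref{l-supp}), but the paper organizes the endgame differently. Rather than a coercivity estimate for $\b e_h$ followed by a separate inf-sup lift for $\epsilon_h$, the paper invokes a single combined Babu\v{s}ka--Brezzi stability bound \meqref{biginfsup},
\a{ \|\b w_h\|_{H^1}+\|r_h\|_{L^2} \ \le\  C\sup_{(\b v_h,q_h)} \frac{a(\b w_h,\b v_h)+b(\b v_h,r_h)+b(\b w_h,q_h)}{\|\b v_h\|_{H^1}+\|q_h\|_{L^2}}, }
applied directly to $(\b u_h-\b I_h\b u,\,p_h-p_I)$; Galerkin orthogonality replaces $(\b u_h,p_h)$ by $(\b u,p)$ in the numerator, and the two lemmas finish the proof in one line. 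Your split route works as well, but as written it has a small circularity you gloss over with ``Combining gives'': the cross term $b(\b e_h,\epsilon_h)$ in the velocity step is controlled only by $Ch^{k+1}\|\b u\|_{H^{k+2}}\|\epsilon_h\|_{L^2}$, so you cannot conclude $\|\b e_h\|_{H^1}\le Ch^{k+1}(\cdots)$ before the pressure bound is available; you must either substitute the pressure estimate (which itself contains $\|\b e_h\|_{H^1}$) back and close with Young's inequality, or add the two inequalities and absorb. The paper's single inf-sup step sidesteps this coupling entirely. One minor misattribution: the bounded right inverse of $\d:\b V_h\to P_h$ you need for the pressure step is the \emph{discrete} inf-sup condition established in \cite{Huang-Zhang,Zhang-Q}, not the continuous Arnold--Scott--Vogelius lifting, which in the paper is used only to define $p_I$.
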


 \begin{proof}

By the inf-sup condition shown in \cite{Huang-Zhang,Zhang-Q}, 
   it follows that, cf. 
    \cite{Raviart}, for all $(\b w_h, r_h)\in \b V_h\times P_h$,
  \e{\lab{biginfsup}
   \sup_{(\b v_h,q_h)\in \b V_h\times P_h} \frac
   {a(\b w_h,\b v_h) + b(\b v_h, r_h) +
	  b(\b w_h, q_h) } {\| \b v_h\|_{H^1}
	   + \| q_h\|_{L^2} }
    \ge C (\|\b w_h\|_{H^1} +\|r_h\|_{L^2}). }
By Corollary \mref{sup-ab} and Lemma \mref{l-supp}, we have
\a{    &\quad \|\b u_h - \b I_h \b u\|_{H^1}+\|p_h - p_I\|_{L^2} \\
   & \le  C \sup_{(\b v_h, q_h)\in\b V_h\times P_h}
	\frac{ a(\b u_h - \b I_h \b u ,\b v_h)
	 + b(\b v_h, p_h-p_I) + b(\b u_h - \b I_h \b u, q_h) }
      {\|\b v_h\|_{H^1}+\|q_h\|_{L^2} } \\
    & =   C  \sup_{(\b v_h, q_h)\in\b V_h\times P_h}
	\frac{ a(\b u  - \b I_h \b u ,\b v_h)
	 + b(\b v_h, p-p_I) + b(\b u  - \b I_h \b u, q_h) }
      {\|\b v_h\|_{H^1}+\|q_h\|_{L^2} } \\
    & \le  C h^{k+1}( \|\b u\|_{H^{k+2}} + \|p\|_{H^{k+1}}). }
 Note that, due to the pointwise divergence free property, we have
  above that
  \a{ b(\b u_h - \b I_h \b u, q_h) = 
	b(-\b I_h \b u, q_h) = b(\b u-\b I_h \b u,q_h). }
 \end{proof} 

Here, to be precise, 
    we do not have a superconvergence for $p$ in
   Theorem \mref{t-sup}.
As $P_h$ are degree-$k$ polynomials, the  best order 
   approximation to $p$ in $L^2$-norm would be $(k+1)$.
 However, due to the mixed formulation, the
   convergence of $p_h$ to $p$ is limited to the
     optimal order convergence of $\b u_h$, which is $(k-1)$ in $H^1$-norm
  as $\b u_h$ has polynomial degree $k$ only in $y$ direction
    for its first component.
In this sense, the superconvergence result \meqref{truncation2}
   does lift the order of $p_h$ by one.
For $k>1$, we may have two order superconvergence for the
   velocity.
Such numerical examples are shown in \cite{Zhang-Q} and in next section.
That is, for some special functions $\b u$, $\b I_h \b u$ might be
  also in the divergence-free subspace of $\b V_h$.  
If so, we have a two-order superconvergence result.

\begin{theorem} (two-order superconvergence)
   \lab{t-sup2}  For some solution $\b u$ of \meqref{e-2}, if
   \a{ \b I_h \b u\in \b Z_h
		:=\left\{ \b z_h \in \b V_h \mid \d \b z_h = 0 \right\}, }
   where $\b I_h$ is defined in \meqref{interpolation}, and if
  $k>1$,  then 
  \an{\lab{e-sup2}
    \|\b u_h - \b I_h \b u \|_{H^1}  & \le C h^{k+2}
   \|\b u\|_{H^{k+3}}. } 
  \end{theorem}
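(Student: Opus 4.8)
The plan is to run the same inf-sup / Galerkin-orthogonality argument used to prove Theorem~\ref{t-sup}, but now exploiting the extra hypothesis $\b I_h\b u\in\b Z_h$ to upgrade the consistency error on $a(\cdot,\cdot)$ from order $k+1$ to order $k+2$, and to kill the terms involving $p$ entirely. First I would note that, since $\b u_h-\b I_h\b u\in\b Z_h\subset\b V_h$ (using $\d\b u_h=0$ from \meqref{e-u1} together with $\b I_h\b u\in\b Z_h$), the divergence-free subspace $\b Z_h$ of $\b V_h$ is itself a conforming space for a coercive problem: on $\b Z_h$ the bilinear form $b(\cdot,\cdot)$ drops out, $a(\cdot,\cdot)$ is coercive by the Poincar\'e inequality, and the discrete velocity $\b u_h$ is the Galerkin projection of $\b u$ onto $\b Z_h$ in the $a$-inner product. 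Hence
\a{
 \|\b u_h-\b I_h\b u\|_{H^1}
 \le C\,\sup_{\b z_h\in\b Z_h}
 \frac{a(\b u_h-\b I_h\b u,\b z_h)}{\|\b z_h\|_{H^1}}
 = C\,\sup_{\b z_h\in\b Z_h}
 \frac{a(\b u-\b I_h\b u,\b z_h)}{\|\b z_h\|_{H^1}},
}
where the last equality uses the first equation of \meqref{e-finite} with test function $\b z_h\in\b Z_h$ (so $b(\b z_h,p_h)=-\int_\Omega\d\b z_h\,p_h=0$) together with the fact that the exact solution satisfies $a(\b u,\b z_h)+b(\b z_h,p)=(\b f,\b z_h)$ and $b(\b z_h,p)=0$. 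Thus the pressure term genuinely disappears, which is exactly why we can hope for $k+2$ rather than $k+1$.

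Next I would bound the numerator $a(\b u-\b I_h\b u,\b z_h)$ by the \emph{two-order} superconvergence estimates already proved, rather than the one-order corollaries used in Theorem~\ref{t-sup}. Writing $\b z_h=(\phi,\psi)\in V_{h,1}\times V_{h,2}$ and $\b u=(u,v)$, the form $a(\b u-\b I_h\b u,\b z_h)$ expands into the four integrals $\int_\Omega(u-u_I)_x\phi_x$, $\int_\Omega(u-u_I)_y\phi_y$, $\int_\Omega(v-v_I)_x\psi_x$, $\int_\Omega(v-v_I)_y\psi_y$. The first is bounded by $Ch^{k+2}\|u\|_{H^{k+3}}\|\phi\|_{H^1}$ via Lemma~\ref{l-sup1} (here $k>1$ is exactly where this hypothesis is used); the second by Lemma~\ref{l-sup3}; the third and fourth by Corollary~\ref{c-sup4} and Lemma~\ref{l-sup2} (after the $x\leftrightarrow y$ symmetry). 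This is precisely the combination recorded as \meqref{sup-a} in Lemma~\ref{sup-ab}, so in fact $|a(\b u-\b I_h\b u,\b z_h)|\le Ch^{k+2}\|\b u\|_{H^{k+3}}\|\b z_h\|_{H^1}$ already, with no separate argument needed. Dividing by $\|\b z_h\|_{H^1}$ and taking the supremum gives \meqref{e-sup2}.

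The only genuine subtlety — and the step I expect to be the main obstacle to state cleanly — is the reduction to the $\b Z_h$-coercivity estimate, i.e. justifying that $\b u_h-\b I_h\b u\in\b Z_h$ and that the coercivity constant of $a(\cdot,\cdot)$ on $\b Z_h$ is $h$-independent. The inclusion is immediate: $\d(\b u_h-\b I_h\b u)=\d\b u_h-\d\b I_h\b u=0-0$ by \meqref{e-u1} and the hypothesis. Coercivity on $\b Z_h$ is inherited from coercivity of $a$ on all of $H^1_0(\Omega)^2$ (Poincar\'e), so no discrete inf-sup is even invoked here; what replaces the full inf-sup \meqref{biginfsup} of Theorem~\ref{t-sup} is the trivial coercivity bound on the divergence-free subspace. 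I would close by remarking that the hypothesis $\b I_h\b u\in\b Z_h$ is what makes $\b u_h$ the $a$-orthogonal projection of $\b u$ onto $\b Z_h$ with $\b I_h\b u$ as a legitimate comparison element in $\b Z_h$; without it, $\b I_h\b u\notin\b Z_h$ and one is forced back to the mixed formulation and the weaker $k+1$ rate, $p$-dependence and all.
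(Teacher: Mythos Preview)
Your proposal is correct and follows essentially the same route as the paper: restrict to the divergence-free subspace $\b Z_h$, use coercivity of $a(\cdot,\cdot)$ there, invoke Galerkin orthogonality $a(\b u_h-\b I_h\b u,\b z_h)=a(\b u-\b I_h\b u,\b z_h)$ (the pressure terms vanish on $\b Z_h$), and finish with the two-order estimate \meqref{sup-a}. The paper's proof is more terse but structurally identical; your added remarks on why $\b u_h-\b I_h\b u\in\b Z_h$ and why the coercivity constant is $h$-independent are correct elaborations of steps the paper leaves implicit.
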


 \begin{proof} By \meqref{sup-a},
     limited in to the divergence-free subspace, 
  \a{ \|\b u_h - \b I_h \b u\|_{H^1} & \le  \sup_{\b w_h\in \b Z_h}
     \frac{ a(\b u_h - \b I_h \b u, \b w_h) }{ \|\b w_h\|_{H^1} }\\
	  &= \sup_{\b w_h\in \b Z_h}
     \frac{ a( \b u - \b I_h \b u, \b w_h) }{ \|\b w_h\|_{H^1} } 
    \le C h^{k+2} \|\b u\|_{H^{k+3}}. } 
 \end{proof}

\section{Numerical tests}\lab{s-numerical}

In this section, we report some results of numerical experiments on
   the \Qe element
	for the stationary Stokes equations
   \meqref{e-2} on the unit square $\Omega=[0,1]^2$.
The  grids $ {\cal T}_h$ are depicted in Figure \mref{f-grid},
   i.e., each squares are refined into 4 sub-squares each level.
The initial grid, level one grid, is simply the unit square.
 
We choose an exact solution for  the Stokes equations \meqref{e-2}:
   \e{\lab{e-solution} \b u= \vc g,  \quad p=\Delta g. }
Here \a{    g = 2^8 (x^3-x^4)^2  (y^3-y^4)^2. }
So we can compute
   the right hand side function $\b f$ for \meqref{e-2} as
   \e{\lab{e-test} 
       \ad{\b f & = -\Delta \vc g + \nabla \Delta g. } } 
We note that, unlike \cite{Zhang-Q,Huang-Zhang},
  we intentionally choose a non-symmetric solution so that
   no ultraconvergence would happen, which does not exist 
    in general. 
The solution $p$ is plotted in Figure \mref{f-solutions}.

\begin{figure}[htb]\setlength\unitlength{1in}\begin{center}
    \begin{picture}(4.5,1.75)(0,.25) 
   \put(0,0){ \epsfysize=2in \epsfbox{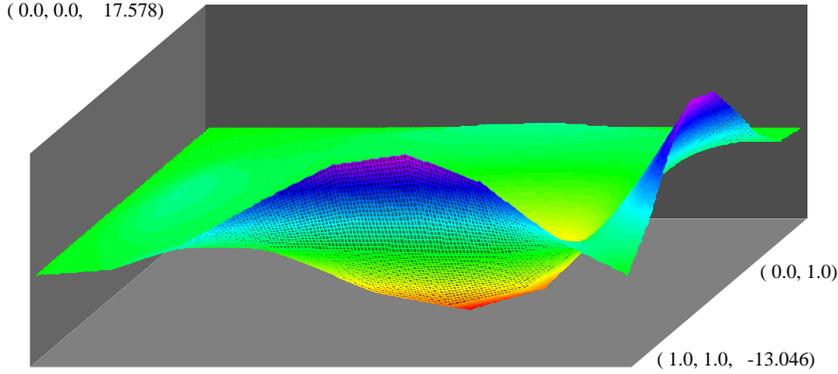}}
    \end{picture}\end{center}
\caption{ The solution $p$ (the errors are shown in Figure \mref{f-errors}.) }   
\lab{f-solutions}
\end{figure} 

We compute the Stokes solution on refined grids, cf. Figure \mref{f-grid},
  by the divergence \Qe element \meqref{e-space} and by the 
   rotated Bernardi-Raugel element \cite{Bernardi-Raugel,BrezziD,Raviart}:
 \an{ \lab{e-BR}
    \b V_h^{BR} &= \left\{ \b v_h \in C(\Omega)^2\cap H^1_0(\Omega)^2  \mid 
         \left. \b v_h \right|_{K}\in Q_{k+1,k}\times Q_{k,k+1}
       \ \forall K\in {\cal T}_h  \right\}, \\
    \nonumber P_h^{BR} &= \left\{ q_h \in L^2_0(\Omega) \mid 
	  q_h |_{K}\in Q_{k-1} \right\}. }
Following the analysis in \cite{Huang-Zhang}, the stability of
   the rotated Bernardi-Raugel element would be proved.
For the rotated Bernardi-Raugel element, the system of finite element equations 
  is solved by the Uzawa iterative method, cf. 
   \cite{BrezziD,Raviart,book-Fortin}.
The stop criterion is the difference $|p_h^{(n)} -p_h^{(n-1)}|\le 10^{-6}$.
We list the number of Uzawa iterations in the data tables by
  \#Uz.
Here the interpolation operators are standard Lagrange nodal
   interpolations \cite{Ciarlet}.

 \begin{table}[htb]
  \caption{ The errors  $\b e_h=\b u-\b I_h \b u$ and
   $\epsilon_h=p-p_I$ for  \meqref{e-solution}.}
\lab{b-1} 
\begin{center}  \begin{tabular}{c|rr|rr|rr|r}  
\hline & $ |\b e_h|_{L^2}$ &$h^n$ &  $ |\b e_h|_{H^1}$ & $h^n$ &
    $ \|\epsilon_h\|_{L^2}$   &$h^n$ & \\ \hline
   & \multispan{5} \hfil \Qe divergence-free element, $k=1$ \hfil &&\#it  \\ \hline
 2& 0.264345&   & 1.341770&   & 5.965379&   &   4\\
 3& 0.102329&1.4& 0.795594&0.8& 1.896372&1.7&   4\\
 4& 0.026839&1.9& 0.219469&1.9& 0.481076&2.0&   3\\
 5& 0.006773&2.0& 0.055901&2.0& 0.120363&2.0&   3\\
 6& 0.001697&2.0& 0.014035&2.0& 0.030083&2.0&   3\\
 7& 0.000424&2.0& 0.003512&2.0& 0.007520&2.0&   3\\
\hline   & \multispan{5} \hfil rotated Bernardi-Raugel element \meqref{e-BR}
         \hfil &&\#Uz  \\ \hline
 2& 0.570990&   & 3.531380&   & 7.497615&   &  29\\
 3& 0.244967&1.2& 3.028368&0.2& 6.943183&0.1&  65\\
 4& 0.074335&1.7& 1.797533&0.8& 3.300598&1.1& 136\\
 5& 0.019849&1.9& 0.946426&0.9& 1.575390&1.1& 297\\
 6& 0.005080&2.0& 0.481087&1.0& 0.762341&1.0& 330\\
 7& 0.001281&2.0& 0.241916&1.0& 0.373990&1.0& 204\\
    \hline 
\end{tabular}\end{center} \end{table}
 
For the \Qe divergence-free element,  the pressure does not enter
   into computation, but is obtained as a byproduct, because
   $P_h=\d \b V_h$.
The resulting linear system of \Qe divergence-free element equations
  can be formulated as symmetric positive definite.
Then the iterated penalty method \cite{book-Fortin,Zhang-Q}
  can be applied to obtain the divergence-free finite element
   solution for the velocity, and a byproduct $p_h=\d \b w_h$
   for the pressure.
In our computation, the iterated penalty parameter is $2000$.
The stop criterion is the divergence $\|\d \b u_h^{(n)}\|_0 \le 10^{-9}$.
 The number of iterated penalty iterations is also listed as
   \#it in
  the data tables. 

In Table \mref{b-1},  we list the errors in various norms for the
   \Qe divergence-free element and for the rotated Bernardi-Raugel element,
   for $k=1$.
It is clear that the order of convergence is 2, one order higher
   than that of latter.
We note that the convergence order is only 2 for 
    $Q_{2,1}$-$Q_{1,2}$ divergence-free elements in $L^2$-norm, i.e.,
   no $L^2$-superconvergence.
But we do see $L^2$-superconvergence for $k>1$ next.

 \begin{table}[htb]
  \caption{ The errors  $\b e_h=\b u-\b I_h \b u$ and
   $\epsilon_h=p-p_I$ for \meqref{e-solution}.}
\lab{b-2} 
\begin{center}  \begin{tabular}{c|rr|rr|rr|r}  
\hline & $ |\b e_h|_{L^2}$ &$h^n$ &  $ |\b e_h|_{H^1}$ & $h^n$ &
    $ \|\epsilon_h\|_{L^2}$   &$h^n$ & \\ \hline
   & \multispan{5} \hfil  
    \Qe divergence-free element, $k=2$ \hfil &&\#it  \\ \hline
 1& 0.322530&0.0& 1.580066&0.0& 3.546405&0.0&   3\\
 2& 0.071851&2.2& 0.699614&1.2& 1.010498&1.8&   4\\
 3& 0.005510&3.7& 0.089611&3.0& 0.131816&2.9&   3\\
 4& 0.000355&4.0& 0.010471&3.1& 0.015587&3.1&   3\\
 5& 0.000022&4.0& 0.001280&3.0& 0.001904&3.0&   3\\
 6& 0.000001&4.0& 0.000159&3.0& 0.000236&3.0&   3\\
 7& 0.000000&4.0& 0.000020&3.0& 0.000029&3.0&   3\\
\hline   & \multispan{5} \hfil  
   rotated  Bernardi-Raugel element \meqref{e-BR} \hfil &&\#Uz  \\ \hline
 1& 0.645475&0.0& 4.250791&0.0& 1.143688&0.0&  27\\
 2& 0.191342&1.8& 2.518701&0.8& 5.499136&---&  67\\
 3& 0.025892&2.9& 0.673622&1.9& 1.621194&1.8& 100\\
 4& 0.003307&3.0& 0.172036&2.0& 0.441596&1.9& 156\\
 5& 0.000419&3.0& 0.043543&2.0& 0.113029&2.0& 266\\
 6& 0.000053&3.0& 0.010954&2.0& 0.028424&2.0& 130\\
 7& 0.000007&3.0& 0.002747&2.0& 0.007117&2.0& 101\\
    \hline 
\end{tabular}\end{center} \end{table}

In Table \mref{b-2},  we list the computation results for $k=2$
   elements.  Again, the divergence-free element is one order 
    higher than the rotated Bernardi-Raugel element.
To show the difference in the two elements,  we plot the
   errors by two elements on level 4 grid in Figure \mref{f-errors}.
One can see the advantage of the divergence-free element, which
   fully utilizes the approximation power of $\b u_h$ by lifting
   the pressure polynomial degree.
Of course, another  advantage is the divergence-free 
  solution after such a lift.
We finally report the results for $k=3$ in Table \mref{b-3}.
All numerical results confirm the theory, and also show the
  sharpness of the superconvergence analysis.

\begin{figure}[htb]\setlength\unitlength{1in}\begin{center}
    \begin{picture}(4,3.75)(0,0.25) 
  \put(0,2){ \epsfysize=2in \epsfbox{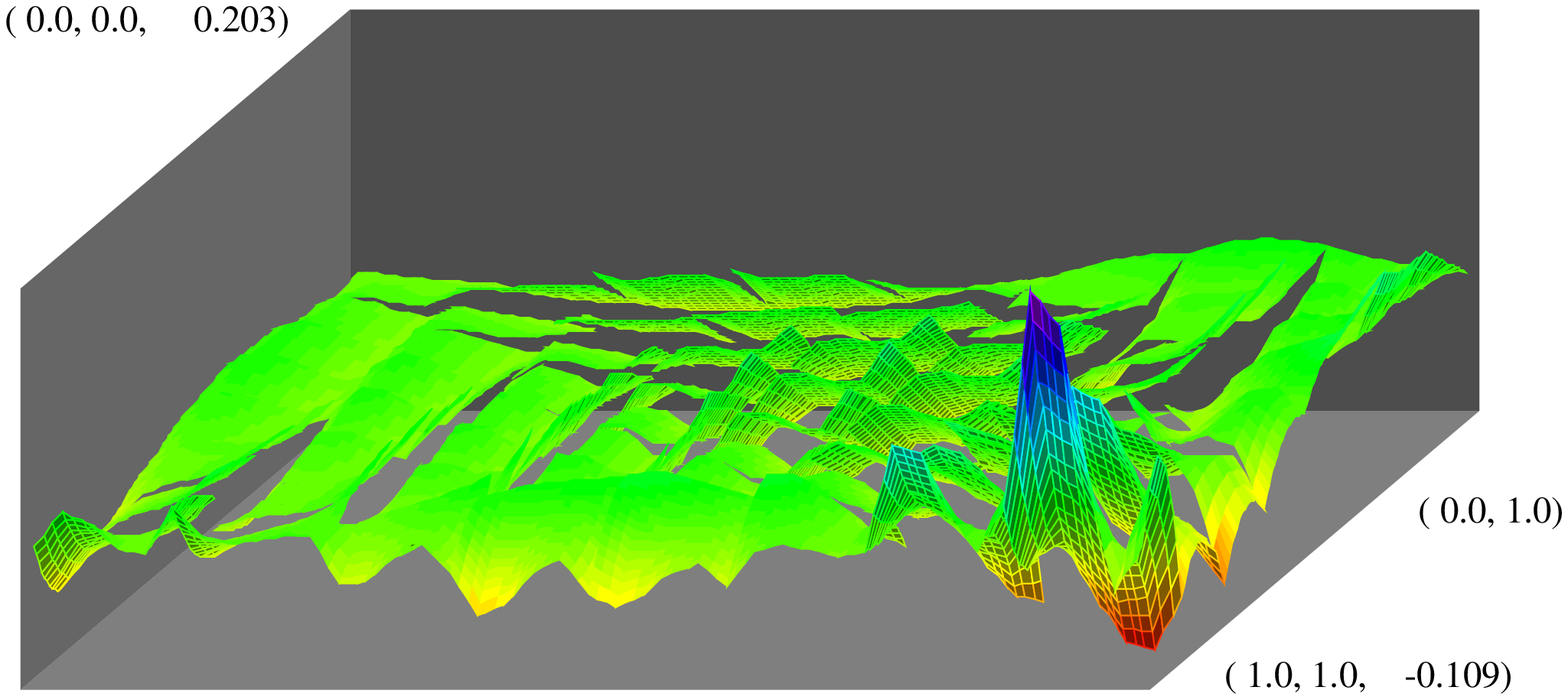}}
   \put(0,0){ \epsfysize=2in \epsfbox{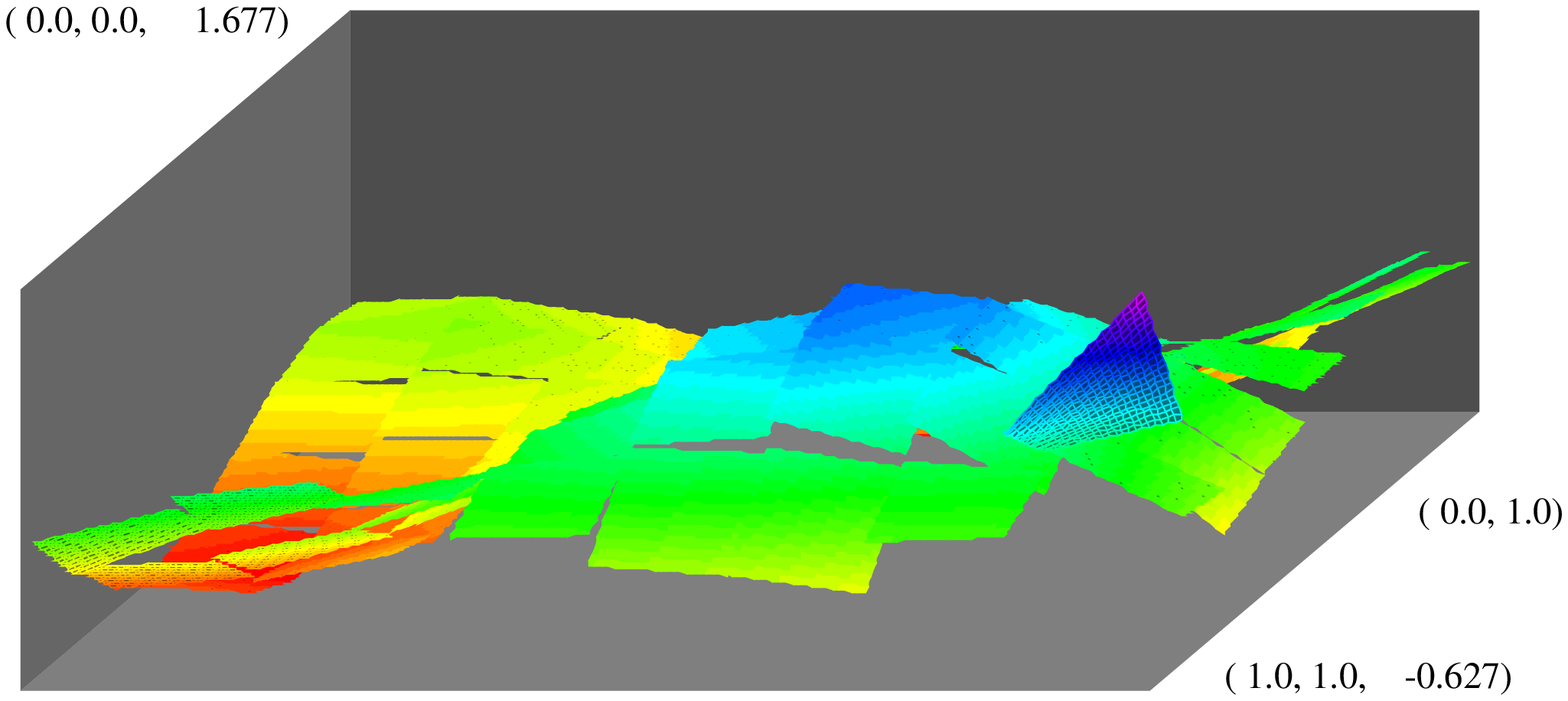}}
  \end{picture}\end{center}
\caption{ The errors of $p_h$ for the divergence-free (top)
   and BR elements. }   
\lab{f-errors}
\end{figure}

\begin{table}[htb]
  \caption{ The errors  $\b e_h=\b u-\b I_h \b u$ and
   $\epsilon_h=p-p_I$ for \meqref{e-solution}.}
\lab{b-3} 
\begin{center}  \begin{tabular}{c|rr|rr|rr|r}  
\hline & $ |\b e_h|_{L^2}$ &$h^n$ &  $ |\b e_h|_{H^1}$ & $h^n$ &
    $ \|\epsilon_h\|_{L^2}$   &$h^n$ & \\ \hline
   & \multispan{5} \hfil \Qe divergence-free element, $k=3$
	 \hfil &&\#it  \\ \hline
  1& 0.123142&0.0& 1.128619&0.0& 1.642992&0.0&   4\\
 2& 0.004515&4.8& 0.065512&4.1& 0.128938&3.7&   3\\
 3& 0.000147&4.9& 0.003911&4.1& 0.008007&4.0&   3\\
 4& 0.000004&5.0& 0.000234&4.1& 0.000494&4.0&   3\\
 5& 0.000000&5.0& 0.000014&4.0& 0.000031&4.0&   3\\
\hline   & \multispan{5} \hfil 
   rotated Bernardi-Raugel element \meqref{e-BR} \hfil &&\#Uz  \\ \hline
1& 0.374364&0.0& 3.512050&0.0& 6.061521&0.0&  57\\
 2& 0.021063&4.2& 0.375407&3.2& 0.736746&3.0&  76\\
 3& 0.001597&3.7& 0.058926&2.7& 0.117000&2.7& 123\\
 4& 0.000111&3.8& 0.008169&2.9& 0.013672&3.1& 177\\
 5& 0.000007&3.9& 0.001065&2.9& 0.001666&3.0& 102\\
    \hline 
\end{tabular}\end{center} \end{table}

Finally, we test the two-order superconvergence in Theorem \mref{t-sup2}.
We choose a symmetric function as the exact solution of the Stokes
	equations \meqref{e-2}:
  \an{\lab{e-solution2} \b u = \vc g, \quad
	g=2^8 (x-x^2)^2(y-y^2)^2. }
Comparing to the data in Table \mref{b-3},  we can see, in Table
	\mref{b-4},  that the
   velocity does converge with another order higher than the
   optimal order.
This is predicted in \meqref{e-sup2}.
Here the order of convergence for the pressure is the same as 
 that in Table \mref{b-3}.
It indicates that the analysis in Theorem \mref{t-sup} is sharp.
Here we have an order-two superconvergence in $L^2$-norm too, for
   the velocity.
But this is not proved in this manuscript.

\begin{table}[htb]
  \caption{ The errors  $\b e_h=\b u-\b I_h \b u$ and
   $\epsilon_h=p-p_I$ for \meqref{e-solution2}.}
\lab{b-4} 
\begin{center}  \begin{tabular}{c|rr|rr|rr|r}  
\hline & $ |\b e_h|_{L^2}$ &$h^n$ &  $ |\b e_h|_{H^1}$ & $h^n$ &
    $ \|\epsilon_h\|_{L^2}$   &$h^n$ & \\ \hline
   & \multispan{5} \hfil \Qe divergence-free element, $k=3$
	 \hfil &&\#it  \\ \hline
 2& 0.001196745&   & 0.024927233&   & 0.1147364&3.8&  4\\
 3& 0.000045519&4.7& 0.001383336&4.2& 0.0069166&4.1&  4\\
 4& 0.000000937&5.6& 0.000051730&4.7& 0.0004383&4.0&  4\\
 5& 0.000000016&5.8& 0.000001826&4.9& 0.0000276&4.0&  4\\ 
 6& 0.000000000&5.9& 0.000000060&4.9& 0.0000017&4.0&  4\\
     \hline 
\end{tabular}\end{center} \end{table}

\def\comment#1{}
 
\comment{ --------- run on 06/12/11-10:57:55 ----------------------
  u, inf, l2, h1,p-inf pde= 1 dsol= 42 #Uz= 204 RB= 1 deg= 2 1
   G-Lab pt= 1
   u,  l2, h1 p-l2 #Uz
  re( 3 2 5) lv( 12, i )
 1& 0.000000&0.0& 0.000000&0.0& 8.000000&0.0&   1\\
 2& 0.570990&***& 3.531380&***& 3.778333&1.1&  29\\
 3& 0.244967&1.2& 3.028368&0.2& 1.393887&1.4&  65\\
 4& 0.074335&1.7& 1.797533&0.8& 0.474767&1.6& 136\\
 5& 0.019849&1.9& 0.946426&0.9& 0.144299&1.7& 297\\
 6& 0.005080&2.0& 0.481087&1.0& 0.040650&1.8& 330\\
 7& 0.001282&2.0& 0.241917&1.0& 0.010872&1.9& 204\\
}

\noindent  Hunan Key Laboratory for Computation and Simulation in Science
   and Engineering, Xiangtan University, China, 411105.
   huangyq@xtu.edu.cn.

\noindent 
Department of Mathematical Sciences, University
     of Delaware, DE 19716. szhang@udel.edu.  
\end{document}